\DeclareMathOperator*{\dist}{dist}
\DeclareMathOperator*{\supp}{supp}
\DeclareMathOperator*{\R}{Re}
\DeclareMathOperator*{\I}{Im}
\DeclareMathOperator*{\sgn}{sgn}
\newcommand{\e}{\mathrm{e}}
\newcommand{\ii}{\mathrm{i}}
\newcommand{\dd}{\mathrm{d}}
\newcommand{\Mlog}{M_\mathrm{log}}
\newcommand{\mlog}{m_\mathrm{log}}
\newcommand{\Lloc}{L^1_\mathrm{loc}}
\newcommand{\Lip}{\mathrm{Lip}}
\newcommand{\BUC}{\mathrm{BUC}}
\newcommand{\Cc}{C_\mathrm{c}}
\newcommand{\Cb}{C_\mathrm{b}}
\newcommand{\Wc}{W_\mathrm{c}^{1,1}}
\newcommand{\B}{\mathcal{B}}
\newcommand{\F}{\mathcal{F}}
\newcommand{\RR}{\mathbb{R}}
\newcommand{\CC}{\mathbb{C}}
\newcommand{\NN}{\mathbb{N}}
\newtheorem{thm}{Theorem}[section]
\newtheorem{lem}[thm]{Lemma}
\newtheorem{cor}[thm]{Corollary}
\theoremstyle{definition}
\newtheorem{rem}[thm]{Remark}
\newtheorem{ex}[thm]{Example}
\numberwithin{equation}{section}  
\begin{document}

\title{Quantified versions of Ingham's theorem}

\author{Ralph Chill}
\address{TU Dresden, Institut f\"ur Analysis, 01062 Dresden, Germany}
\email{ralph.chill@tu-dresden.de}

\author{David Seifert}
\address{St John's College, St Giles, Oxford\;\;OX1 3JP, United Kingdom}
\email{david.seifert@sjc.ox.ac.uk}

\begin{abstract}
We obtain quantified versions of Ingham's classical Tauberian theorem and some of its variants by means of a natural modification of Ingham's own simple proof. As corollaries of the main general results, we obtain quantified decay estimates for $C_0$-semigroups. The results reproduce those known in the literature but are both more general and, in one case, sharper. They also  lead to a better understanding of the previously obscure ``fudge factor''  appearing in proofs based on estimating contour integrals.
 \end{abstract}

\thanks{The second author was supported by the EPSRC grant  EP/J010723/1 held by Professors C.J.K.\ Batty (Oxford) and Y.\ Tomilov (Warsaw). He wishes to thank the Analysis group of the TU Dresden for their kind hospitality during his visit in September 2014, during which much of this work was carried out.}
\subjclass[2010]{Primary: 47D06, 40E05; secondary: 35B40, 34D05, 34G10.}
\keywords{Ingham's theorem, Tauberian theorem, quantified, rates of decay, $C_0$-semigroups, damped wave equation.}

\maketitle

\section{Introduction}\label{intro}

Ingham's theorem  \cite{Ing33} is a cornerstone of modern Tauberian theory. It leads to an elementary proof of the prime number theorem and also forms the starting point for many important results at the intersection of complex analysis and operator theory, such as the ABLV stability theorem for operator semigroups and results of Katznelson-Tzafriri type, with applications in partial differential equations and dynamical systems; see \cite{CT07} for an overview. The version of Ingham's theorem  most suitable for the purposes of this article is stated as Theorem~\ref{Ingham} below. Here, given a Banach space $X$, the Laplace transform $\smash{\widehat{f}}$ of a bounded measurable function $f:\RR_+\to X$ is defined by 
$$\widehat{f}(\lambda)=\int_0^\infty\e^{-\lambda t}f(t)\,\dd t,\quad \R\lambda>0,$$
and the Fourier transform $\F f$ of a function $f\in L^1(\RR;X)$ is defined by 
$$(\F f)(s)=\int_\RR \e^{-\ii st}f(t)\,\dd t,\quad s\in\RR.$$ 
The notation used for classical function spaces is standard. 

\begin{thm}\label{Ingham}
Let $X$ be a Banach space and let $f\in \BUC(\RR_+;X)$. Suppose there exists a function $F\in \Lloc(\RR;X)$ such that 
\begin{equation}\label{extension}
\lim_{\alpha\to0+}\int_\RR \widehat{f}(\alpha+\ii s)\psi(s)\,\dd s=\int_\RR F(s)\psi(s)\,\dd s
\end{equation}
for all $\psi\in \Cc(\RR)$. Then $f\in C_0(\RR_+;X)$.
\end{thm}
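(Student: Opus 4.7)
The plan is to follow Ingham's original smoothing strategy: I introduce a band-limited approximate identity on $\RR$ and use it to split the argument into a regularity part, controlled by the $\BUC$ hypothesis, and a Tauberian part, controlled by (\ref{extension}) combined with the Riemann--Lebesgue lemma. Concretely, let $K_R$ be the Fej\'er kernel satisfying $\widehat{K_R}(s)=(1-|s|/R)_+$, so that $K_R\ge 0$, $\int_\RR K_R(t)\,\dd t=1$, $\widehat{K_R}\in\Cc(\RR)$, and $\int_{|t|>\delta}K_R(t)\,\dd t = O(1/(R\delta))$.

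The central identity I would establish is
$$
\int_0^\infty f(t)K_R(T-t)\,\dd t = \frac{1}{2\pi}\int_\RR \e^{\ii Ts}F(s)\bigl(1-|s|/R\bigr)_+\,\dd s, \qquad T\in\RR.
$$
For $\alpha>0$, extend $t\mapsto \e^{-\alpha t}f(t)$ by zero to $(-\infty,0)$; the resulting function $g_\alpha$ lies in $L^1(\RR;X)\cap L^\infty(\RR;X)$ with Fourier transform $\widehat f(\alpha+\ii\cdot)$. Since $\widehat{K_R}$ is compactly supported, the product $\widehat{g_\alpha}\cdot\widehat{K_R}$ is integrable and Fourier inversion yields
$$
(g_\alpha\ast K_R)(T)=\frac{1}{2\pi}\int_\RR \e^{\ii Ts}\widehat f(\alpha+\ii s)\bigl(1-|s|/R\bigr)_+\,\dd s.
$$
Letting $\alpha\to 0+$, dominated convergence (using $f\in L^\infty$ and $K_R\in L^1$) turns the left-hand side into $\int_0^\infty f(t)K_R(T-t)\,\dd t$, while $\psi(s)=\frac{1}{2\pi}\e^{\ii Ts}(1-|s|/R)_+\in\Cc(\RR)$, so (\ref{extension}) applied to this $\psi$ produces the right-hand side. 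The Riemann--Lebesgue lemma for $X$-valued $L^1$ functions then shows that for each fixed $R$ this right-hand side tends to $0$ as $T\to\infty$.

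To conclude, I would compare $\int_0^\infty f(t)K_R(T-t)\,\dd t$ with $f(T)$. Writing $f(T)=f(T)\int_\RR K_R(T-t)\,\dd t$ and splitting the difference into the regions $|T-t|\le\delta$ and $|T-t|>\delta$ (plus a piece coming from $t<0$, which is $O(\|f\|_\infty/(RT))$), the Fej\'er tail bound and the uniform continuity of $f$ give
$$
\Bigl|f(T)-\int_0^\infty f(t)K_R(T-t)\,\dd t\Bigr|\le \omega_f(\delta)+\frac{C\|f\|_\infty}{R\delta}
$$
for $T$ sufficiently large, where $\omega_f$ is the modulus of continuity of $f$. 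Given $\varepsilon>0$, I would choose $\delta$ so that $\omega_f(\delta)<\varepsilon$, then $R$ so that $C\|f\|_\infty/(R\delta)<\varepsilon$; for $T$ large the right-hand side of the central identity is also $<\varepsilon$, whence $\limsup_{T\to\infty}|f(T)|\le 3\varepsilon$. The main obstacle is justifying the boundary Parseval identity, and it is precisely the requirement $\psi\in\Cc(\RR)$ in (\ref{extension}) that forces the use of a band-limited kernel such as $K_R$; once the Fej\'er kernel is in hand, the rest is a routine dance between Fourier inversion, dominated convergence, and Riemann--Lebesgue.
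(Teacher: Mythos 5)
Your proof is correct and follows essentially the same smoothing/Tauberian strategy as the paper: establish a boundary Parseval identity by passing to the limit $\alpha\to 0+$ in the Laplace transform, invoke the vector-valued Riemann--Lebesgue lemma to conclude that the band-limited convolution tends to zero, and then let the approximate identity concentrate, controlling the discrepancy by uniform continuity plus a kernel tail estimate. The only cosmetic difference is that you commit to the Fej\'er kernel $K_R$, whereas the paper works with an arbitrary $\phi\in L^1(\RR)$ with $\F\phi\in\Cc(\RR)$ and $\int_\RR\phi=1$ and rescales it as $\phi_R(t)=R\phi(Rt)$.
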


The point of departure for all that follows here is a variant of Ingham's original proof of Theorem~\ref{Ingham}, which is presented below in full.  An alternative argument was given by Korevaar in \cite{Ko82}, and indeed Korevaar's proof is now more well-known than Ingham's due to its use of contour integrals involving an ingenious but somewhat obscure ``fudge factor". Whereas Korevaar's proof is based on properties of the Laplace transform, the calculation in \eqref{Parseval} below shows that the function $F$ can be viewed as the distributional Fourier transform of $f$; see also \cite{Ch98}.

\begin{proof}[Proof of Theorem \ref{Ingham}]
Given a function $\phi\in L^1(\RR)$ such that the Fourier transform $\psi:=\F\phi$ of $\phi$ lies in $C_\mathrm{c}(\RR)$, it follows from the dominated convergence theorem and Parseval's identity that 
\begin{equation}\label{Parseval}\begin{aligned} 
f*\phi(t)&=\int_0^\infty f(s)\phi(t-s)\,\dd s\\&=\lim_{\alpha\to0+}\int_0^\infty \e^{-\alpha s}f(s)\phi(t-s)\,\dd s\\&=\lim_{\alpha\to0+}\frac{1}{2\pi}\int_\RR \e^{\ii st} \widehat{f}(\alpha+\ii s) \psi(s)\,\dd s\\&=\frac{1}{2\pi}\int_\RR \e^{\ii st} F(s) \psi(s)\,\dd s
\end{aligned}\end{equation}
for all $t\ge0$.  Hence $f*\phi(t)\to0$ as $t\to\infty$ by the Riemann-Lebesgue lemma. 

Now let $\phi\in L^1(\RR)$ be as above and such that $\int_\RR\phi(s)\,\dd s=1$. For $R>0$, let $\phi_R(t)=R\phi(Rt)$, $t\in\RR$. By uniform continuity of $f$, given any $\varepsilon>0$, there exists $\delta>0$ such that 
$$\begin{aligned}
\|f(t)-f*\phi_R(t)\|&=\left\|\int_\RR (f(t)-f(t-s))\phi_R(s)\,\dd s\right\|\\&\leq \varepsilon \|\phi\|_1+2\|f\|_\infty\int_{|s|\geq R\delta}|\phi(s)|\,\dd s
\end{aligned}$$
for all $t\geq1$ and $R>0$. Here the function $f$ has been extended by zero to $\RR$. Thus $\|f(t)-f*\phi_R(t)\|\to0$ as $R\to\infty$ uniformly for $t\geq1$, and the result follows.
\end{proof}

 Ingham's theorem can be generalised  in a number of directions, for example by assuming  that  $F\in\Lloc(\RR\backslash\Sigma;X)$ and that \eqref{extension} is  satisfied only for all $\psi\in \Cc (\RR\backslash\Sigma )$, where $\Sigma\subseteq\RR$ is some closed set. In what follows, a function $F\in\Lloc(\RR\backslash\Sigma;X)$ satisfying \eqref{extension}  for all $\psi\in \Cc (\RR\backslash\Sigma )$ will be said to be a \emph{boundary function} of the Laplace transform $\smash{\widehat{f}}$ of $f$. Such generalisations have been applied extensively in the study of the asymptotic behaviour of $C_0$-semigroups and other solution families of linear evolution equations; see \cite[Chapter~4]{ABHN11} for an overview, especially when the singular set $\Sigma$ is countable. 

However, even the simple version in Theorem~\ref{Ingham} with $\Sigma=\emptyset$ has interesting applications to operator semigroups. For instance, if $f(t)=T(t)x$, where $T$ is a bounded $C_0$-semigroup with generator $A$ on a Banach space $X$ and $x\in X$, then Theorem~\ref{Ingham} implies that $\|f(t)\|\to0$ as $t\to\infty$ provided $\sigma(A)\cap\ii\RR=\emptyset$. Moreover, noting that 
\begin{equation*}\label{semigroup}
T(t)A^{-1}x=A^{-1}x+\int_0^t T(s)x\,\dd s,\quad t\geq0, 
\end{equation*}
for all $x\in X$,  it follows from Theorem~\ref{Ingham} that, under the same spectral assumption,  $\|T(t)A^{-1}\|\to0$ as $t\to\infty$. The latter fact is of central importance in the study of energy decay for damped wave equations. Given the recent interest in obtaining rates for this energy decay, it is natural to ask whether there exist quantified versions of Ingham's theorem. In particular, the task becomes to estimate the rate of decay of $f(t)$ as $t\to\infty$ given information about how the boundary function $F$  of $\smash{\widehat{f}}$  behaves near the singular points in $\Sigma$ and near infinity. 
 
The three cases of interest here are, in terminology similar to that of \cite{BCT}, when the boundary function $F$ has a `singularity at infinity', when $F$ has a `singularity at zero' and when $F$ has `singularities at zero and infinity'. In the first case we assume that $\Sigma=\emptyset$ and that the boundary function $F(s)$ has controlled growth as $|s|\to\infty$. This case is treated in Section~\ref{infty}. Some  related results in the case where $\smash{\widehat{f}}$ extends analytically beyond the imaginary axis were obtained in \cite{BD08}  by carefully estimating the contour integrals appearing in Korevaar's proof. In the case of a singularity at zero, we assume that $\Sigma=\{0\}$, that the boundary function $F(s)$ has controlled growth as $|s|\to0$ and, by requiring the function  $f$ to be `asymptotically regular', that the behaviour of $F(s)$ for large values of $|s|$ need not be taken into consideration when estimating integrals of the form appearing on the right-hand side of \eqref{Parseval}. This case is treated in Section~\ref{zero}. Some related results in the context of $C_0$-semigroups have recently been obtained recently in \cite{BCT} by an argument that ultimately goes back to Ingham's proof of Theorem~\ref{Ingham}; see also \cite{Se15}. Finally, in the case of singularities at zero and infinity we again assume that $\Sigma=\{0\}$ but this time without the  assumption of asymptotic regularity. Consequently it will be necessary to assume that the boundary function $F(s)$ has controlled growth both as $|s|\to0$ and as $|s|\to\infty$. This case is treated in Section~\ref{zero_infty}. Some related results in this case were obtained in \cite{Ma11} in the setting where $\smash{\widehat{f}}$ has an analytic extension, again by an argument based on contour integrals.

 The purpose of this article is to address all of the above cases in a unified way and to give quantified versions of Ingham's theorem by extending the technique used in \cite{BCT}. In each case we use the general result to deduce estimates on the rate of decay of $C_0$-semigroups. In particular, this approach recovers the results in \cite{BD08} and \cite{Ma11}, and leads to an improvement of the result in \cite{BCT} when $\Sigma=\{0\}$ and $f$ is asymptotically regular. All of the main results are formulated in the more general setting in which the Laplace transform $\smash{\widehat{f}}$ of $f$ is not assumed to possess a holomorphic extension but merely a boundary function. The new method also leads to a better understanding of the previously obscure ``fudge factor''.  
 
Throughout, all Banach spaces are assumed to be complex and the notation will be standard. In particular, $\RR_+=[0,\infty)$, $\RR_-=(-\infty,0]$ and, given a Banach space $X$, $\Cb(\RR_+;X):=C(\RR_+;X)\cap L^\infty(\RR_+;X)$, $\BUC(\RR_+;X)$ denotes the space of bounded and uniformly continuous functions $f:\RR_+\to X$, both endowed with the supremum norm,  and $\Lip(\RR_+;X)$  the space of Lipschitz continuous functions $f:\RR_+\to X$. For $f\in\Lip(\RR_+;X)$, let $$\|f\|_{\Lip}=\sup\left\{\frac{\|f(s)-f(t)\|}{s-t}:s>t\geq0\right\}.$$ Given a closed operator $A$ on $X$, the spectrum of $A$ is denoted by $\sigma(A)$, its resolvent set by $\rho(A)$ and, for $\lambda\in\rho(A)$, we write $R(\lambda,A)$ for the resolvent operator $(\lambda-A)^{-1}$.

\section{Main results}

As in Section~\ref{intro}, let $X$ be a  Banach space, let $f\in L^\infty(\RR_+;X)$ and suppose that the Laplace transform $\smash{\widehat{f}}$ of $f$ admits a boundary function $F\in\smash{\Lloc}(\RR\backslash\Sigma;X)$, where $\Sigma$ is a closed subset of $\RR$. The following sections treat in turn the cases $\Sigma=\emptyset$ with $F$ having a singularity at infinity, $\Sigma=\{0\}$ with $F$ having a singularity at zero, and $\Sigma=\{0\}$ with $F$ having singularities at zero and infinity. 

\subsection{Singularity at infinity}\label{infty}

The main result in this section is Theorem~\ref{Ingham_infty} below, which is a quantified version of Theorem~\ref{Ingham}. The result also extends \cite{BD08} by giving an estimate on the rate of decay even if the boundary function $F$ has only finitely many derivatives. Given a function $M:\RR_+\to[1,\infty)$ and $k\ge1$, define the function $M_k:\RR_+\to(0,\infty)$ by 
\begin{equation}\label{M_k}
M_k(R)=M(R)\big((1+R)^2M(R)\big)^{1/k},\quad R\ge0,
\end{equation}
and let $\Mlog:\RR_+\to(0,\infty)$ be given by 
\begin{equation}\label{M_log}
\Mlog(R)=M(R)\big(\log(1+R)+\log M(R)\big),\quad R\geq0.
\end{equation}
Note that $M_k$, for each $k\ge1$, and $\Mlog$ are strictly increasing, and hence both possess  inverse functions $M_k^{-1}$ and $\Mlog^{-1}$ defined on the ranges of $M_k$ and $\Mlog$, respectively.

\begin{thm}\label{Ingham_infty}
Let $X$ be a Banach space and let $f\in \Cb(\RR_+;X)\cap \Lip(\RR_+;X)$. Suppose that $\smash{\widehat{f}}$ admits a boundary function $F\in\smash{\Lloc}(\RR;X)$. Then $\smash{f}\in C_0(\RR_+;X)$. Moreover, given a continuous non-decreasing function $M:\RR_+\to[1,\infty)$ , the following hold.
\begin{enumerate}[(a)]
\item\label{infty_a} Suppose that $F\in C^k(\RR;X)$ and that
\begin{equation}\label{Ck_dom_fun}
\|F^{(j)}(s)\|\le C M(|s|)^{j+1},\quad s\in\RR,\;0\le j\le k ,
\end{equation}
for some constant $C>0$. Then, for any $c>0,$ 
\begin{equation}\label{Ck_est}
\|f(t)\|=O\left(\frac{1}{M_k^{-1}(ct)}\right),\quad t\to\infty,
\end{equation}
where $M_k^{-1}$ is the inverse of the function $M_k$ defined in \eqref{M_k}.
\item\label{infty_b} Suppose that $F\in C^\infty (\RR;X)$ and that
\begin{equation}\label{dom_fun}
\|F^{(j)}(s)\|\le C j!M(|s|)^{j+1},\quad s\in\RR,\;j\ge0,
\end{equation}
for some constant $C>0$. Then, for any $c\in (0,1/2)$,
\begin{equation}\label{BD}
\|f(t)\|=O\left(\frac{1}{\Mlog^{-1}(ct)}\right),\quad t\to\infty,
\end{equation}
where $\Mlog^{-1}$ is the inverse of the function $\Mlog$ defined in \eqref{M_log}.
\end{enumerate}
\end{thm}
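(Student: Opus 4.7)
The plan is to refine Ingham's argument from the proof of Theorem~\ref{Ingham} by tracking constants quantitatively, using the Lipschitz hypothesis on $f$ in place of mere uniform continuity, and exploiting the growth bounds on $F^{(j)}$ to extract polynomial or super-polynomial decay. We choose once and for all a function $\phi\in L^1(\RR)$ with $\int_\RR\phi(s)\,\dd s=1$, sufficient polynomial decay at infinity (to guarantee $|\cdot|\phi\in L^1(\RR)$ and give a controllable tail $\int_{|s|>t}|\phi_R(s)|\,\dd s$), and whose Fourier transform $\psi:=\F\phi$ is a $C^\infty$ function compactly supported in $(-1,1)$; a Schwartz mollifier $\phi=\F^{-1}\psi$ with $\psi(0)=1$ will do. Setting $\phi_R(s)=R\phi(Rs)$, so that $\F\phi_R(s)=\psi(s/R)$ is compactly supported in $(-R,R)$, the Parseval computation \eqref{Parseval} yields
$$f*\phi_R(t)=\frac{1}{2\pi}\int_{-R}^{R}\e^{\ii st}F(s)\psi(s/R)\,\dd s.$$
The qualitative statement $f\in C_0(\RR_+;X)$ is already given by Theorem~\ref{Ingham}, since $f\in\Lip(\RR_+;X)\cap\Cb(\RR_+;X)\subset\BUC(\RR_+;X)$.

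Two quantitative estimates must now be balanced. First, the Lipschitz bound $\|f(t)-f(t-s)\|\le\|f\|_{\Lip}|s|$ for $s\le t$, together with the polynomial decay of $\phi$ handling the tail $|s|>t$ (and $f$ extended by zero as in Ingham's proof), gives for all sufficiently large $t$
$$\|f(t)-f*\phi_R(t)\|\le\frac{C_1}{R},$$
with $C_1$ depending only on $\|f\|_\infty$, $\|f\|_{\Lip}$ and $\phi$. Second, integrating by parts $k$ times in the Parseval integral---the boundary terms at $\pm R$ vanishing since $\psi(s/R)$ has compact support inside $(-R,R)$---yields
$$f*\phi_R(t)=\frac{1}{2\pi(\ii t)^k}\int_{-R}^R \e^{\ii st}\frac{\dd^k}{\dd s^k}\bigl[F(s)\psi(s/R)\bigr]\,\dd s.$$
Expanding by Leibniz, invoking \eqref{Ck_dom_fun} with monotonicity of $M$ to bound $\|F^{(j)}(s)\|\le CM(R)^{j+1}$ on $[-R,R]$, and noting that for $R\ge 1$ the $j=k$ term dominates the sum, we arrive at
$$\|f*\phi_R(t)\|\le\frac{C_2\,RM(R)^{k+1}}{t^k}$$
for some $C_2$ depending on $k$ and on seminorms of $\psi$.

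For part~(a), equating the two bounds gives the balance $t^k\sim R^2M(R)^{k+1}$, i.e.\ $t\sim M_k(R)$, and choosing $R=M_k^{-1}(ct)$ for $c>0$ sufficiently small yields \eqref{Ck_est}. For part~(b), the stronger assumption \eqref{dom_fun} introduces an extra factor $k!$ in $C_2$, so that $k$ becomes a free parameter open to optimisation. Stirling's formula lets us replace $k!$ by $(k/\e)^k$ up to polynomial factors in $k$, after which the second bound reads approximately $RM(R)\bigl(kM(R)/(\e t)\bigr)^k$; this is minimised at $k\approx t/M(R)$, giving a value of order $RM(R)\,\e^{-t/M(R)}$. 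Balancing against $1/R$ requires $t/M(R)\sim 2\log R+\log M(R)$; since this right-hand side is asymptotically at most $2\Mlog(R)$, the conclusion \eqref{BD} follows for any $c<1/2$. The principal subtlety is verifying that the polynomial and combinatorial corrections from Stirling and from the Leibniz expansion can be absorbed by taking $c$ strictly less than $1/2$ rather than forcing a worse constant; the flexibility in the choice of $k$ is precisely what makes this possible, and is also what clarifies the role of the ``fudge factor'' alluded to in the introduction.
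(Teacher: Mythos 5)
Your argument for part~(a) is sound and follows essentially the same lines as the paper's: fix $\phi$, obtain $\|f(t)-f*\phi_R(t)\|\lesssim 1/R$ from the Lipschitz hypothesis, obtain $\|f*\phi_R(t)\|\lesssim RM(R)^{k+1}/t^k$ by integrating \eqref{integral} by parts $k$ times, and balance. (Your derivation of the first estimate via the Lipschitz bound directly is in fact slightly more elementary than the paper's Riemann--Stieltjes argument, which is a nice simplification.)

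For part~(b), however, there is a genuine gap, and it lies exactly where you gesture at but do not resolve. You take a generic Schwartz mollifier $\phi$ with $\psi=\F\phi\in\Cc^\infty$ and expand $(F\psi_R)^{(k)}$ by Leibniz. The term with $j$ derivatives on $F$ carries a factor $\binom{k}{j}\,\|\psi^{(k-j)}\|_{1}\,R^{-(k-j)}$. You then absorb all $\psi$-seminorms into a ``$C_2$ depending on seminorms of $\psi$'' and declare that the only $k$-dependence entering the optimisation is the factor $k!$ coming from \eqref{dom_fun}. This is not correct for a generic compactly supported smooth $\psi$: no $C^\infty_{\mathrm{c}}$ function lies in the quasi-analytic class, so the norms $\|\psi^{(m)}\|_1$ must grow strictly faster than $C^m m!$ for every $C$, and for a generic choice they can grow arbitrarily fast in $m$. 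Since in part~(b) the order $k$ of integration by parts is sent to infinity along with $t$, this uncontrolled growth swamps the $R^{-(k-j)}$ decay and invalidates the claim that the $j=k$ term dominates. The subtlety you flag as ``polynomial and combinatorial corrections from Stirling and from the Leibniz expansion'' is precisely this, and it cannot in general be absorbed simply by lowering $c$.

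The paper circumvents this by choosing $\phi$ explicitly (formula \eqref{phi}), whose Fourier transform $\psi$ is piecewise linear: $\psi\equiv 1$ on $[-1/2,1/2]$, affine on $1/2\le|s|\le1$, and $0$ beyond. Then $\psi_R'$ is a step function of magnitude $1/R$ supported on $R/2\le|s|\le R$, and no further derivatives of $\psi_R$ occur; after the first derivative hits $\psi_R$, repeated integration by parts only differentiates $F$ and produces clean boundary terms at $\pm R/2,\pm R$. This yields precisely the three-term formula preceding \eqref{triangle}, with the bound $k!\,RM(R)^{k+1}/t^k$ plus a small, explicitly controlled correction $\frac{kM(R)}{Rt^2}\sum_{j\le k-2}j!M(R)^j/t^j$. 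This explicit choice of $\psi$ \emph{is} the analogue of Korevaar's ``fudge factor'' (the paper notes in Remark~\ref*{infty_rem}\eqref{fudge} that the alternative choice yielding $\psi_R(s)=1-s^2/R^2$ recovers Korevaar's factor exactly). So contrary to your final remark, the fudge factor is not explained by the freedom in $k$ but by the need for a specific non-smooth cutoff $\psi$ with controlled derivative growth. Your argument could be rescued either by adopting the paper's explicit $\phi$ and its piecewise-linear $\psi$, or by choosing $\psi$ in a fixed Gevrey class $s>1$ and verifying (nontrivially, in the balance regime) that the Leibniz sum remains dominated by its $j=k$ term — but as written the step is missing.
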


\begin{rem}\label{hol_rem}
If $F\in C^\infty (\RR ; X)$ and the estimate \eqref{dom_fun} holds, then $F$ has a holomorphic extension to the region $\{\lambda\in\CC:|\I\lambda|<M(|\R\lambda|)^{-1}\}$.  Furthermore, if $F$ is the boundary function of $\smash{\widehat{f}}$ for some $f\in L^\infty(\RR_+;X)$, then $\smash{\widehat{f}}$ extends to the region $\{\lambda\in\CC:\R\lambda>-M(|\I\lambda|)^{-1}\}$, by an application of the `edge-of-the-wedge theorem'; see for instance \cite[\S2 Theorem~B]{Ru71}.
\end{rem}

The proof of Theorem~\ref{Ingham_infty} requires the following elementary lemma.

\begin{lem}\label{Leibniz}
Suppose that $\varphi:(0,\infty)\to\RR_+$ is a decreasing continuous function such that 
$\int_t^\infty\varphi(s)\,\dd s<\infty$ for all $t>0$. Given $\alpha>0$, let the function $\Phi_\alpha:(0,\infty)\to\RR$ be defined by 
$$\Phi_\alpha(t)=\int_t^\infty \varphi(s)\cos(\alpha s)\,\dd s, \quad t>0.$$
Then $|\Phi_\alpha(t)|\leq \frac{4}{\alpha}\varphi(t)$ for all $t>0$, and in particular $\int_t^\infty|\Phi_\alpha(s)|\,\dd s<\infty$ for all $t>0$.
\end{lem}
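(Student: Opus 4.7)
The plan is to exploit the cancellation in the oscillatory integral by splitting the interval $[t,\infty)$ at the zeros of $\cos(\alpha s)$ and applying the alternating series test on the tail. More precisely, let $t^*$ denote the smallest $s\geq t$ with $\cos(\alpha s)=0$, so that $0\le t^*-t\le \pi/\alpha$, and write
\[
\Phi_\alpha(t)=\int_t^{t^*}\varphi(s)\cos(\alpha s)\,\dd s+\int_{t^*}^\infty \varphi(s)\cos(\alpha s)\,\dd s.
\]
The first term is a boundary contribution, the second will be handled by oscillation.

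For the boundary term, I would estimate crudely: since $\varphi$ is decreasing and the interval has length at most $\pi/\alpha$, and since the integral of $|\cos(\alpha s)|$ over any interval of length $\pi/\alpha$ equals $2/\alpha$, one obtains
\[
\left|\int_t^{t^*}\varphi(s)\cos(\alpha s)\,\dd s\right|\le \varphi(t)\int_t^{t^*}|\cos(\alpha s)|\,\dd s\le \frac{2}{\alpha}\varphi(t).
\]

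For the tail, the key observation is that on the successive intervals $I_n:=[t^*+n\pi/\alpha,t^*+(n+1)\pi/\alpha]$ the function $\cos(\alpha s)$ has constant sign alternating with $n$, so
\[
\int_{t^*}^\infty \varphi(s)\cos(\alpha s)\,\dd s=\sum_{n=0}^\infty (-1)^n a_n,\qquad a_n:=\int_{I_n}\varphi(s)|\cos(\alpha s)|\,\dd s.
\]
Because $|\cos(\alpha\cdot)|$ is $\pi/\alpha$-periodic and $\varphi$ is decreasing, the sequence $(a_n)$ is itself decreasing (shifting $s$ by $\pi/\alpha$ leaves the cosine factor unchanged and makes $\varphi$ smaller). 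The alternating series estimate then gives
\[
\left|\sum_{n=0}^\infty (-1)^n a_n\right|\le a_0\le \varphi(t^*)\int_{I_0}|\cos(\alpha s)|\,\dd s=\frac{2}{\alpha}\varphi(t^*)\le \frac{2}{\alpha}\varphi(t).
\]
Adding the two bounds gives $|\Phi_\alpha(t)|\le 4\varphi(t)/\alpha$, and the integrability statement then follows immediately from the hypothesis $\int_t^\infty\varphi(s)\,\dd s<\infty$.

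There is no real obstacle here; the only points to verify carefully are the monotonicity of $(a_n)$, which uses only the periodicity of $|\cos(\alpha s)|$ together with the monotonicity of $\varphi$, and the convergence of the alternating series, which follows because $a_n\to 0$ as a consequence of $\int_{t^*}^\infty \varphi<\infty$.
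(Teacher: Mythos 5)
Your proof is correct and follows essentially the same route as the paper's: split the integral at the first zero $t^*$ of $\cos(\alpha s)$ past $t$, bound the short boundary piece crudely by $\tfrac{2}{\alpha}\varphi(t)$, and exploit the alternating signs on the half-periods $I_n$ to bound the tail by $\tfrac{2}{\alpha}\varphi(t^*)$. The only cosmetic difference is that you invoke the alternating series estimate directly, while the paper pairs consecutive intervals and telescopes; both yield the same bound.
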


\begin{proof}
Let $\alpha>0$. Given $t>0$, let $n_0$ be the least odd integer $n\in\NN$ such that $\frac{n\pi}{2\alpha}\ge t$ and set $t_0=\frac{n_0\pi}{2\alpha}$. Furthermore, for $j\ge0$, let $t_j=\frac{j\pi}{\alpha}$.  Since $\varphi$ is decreasing and  $\varphi(s)\to0$ as $s\to\infty$ by the integrability assumption on $\varphi$,
$$\begin{aligned}\Phi_\alpha(t)&\le \int_t^{t_0}\varphi(s)|\cos(\alpha s)|\,\dd s\\&
\qquad+\sum_{j=0}^\infty \bigg(\int_{t_0+t_{2j}}^{t_0+t_{2j+1}}-\int_{t_0+t_{2j+1}}^{t_0+t_{2j+2}}\bigg)\varphi(s)|\cos(\alpha s)|\,\dd s\\&
\le \varphi(t)\int_t^{t_0}|\cos(\alpha s)|\,\dd s+\frac{2}{\alpha}\sum_{j=0}^\infty\big(\varphi\left(t_0+t_{2j} \right)-\varphi\left(t_0+t_{2j+2} \right)\big)\\&
\le \frac{2}{\alpha}\big(\varphi(t)+\varphi(t_0)\big),
\end{aligned}$$
and hence $\Phi_\alpha(t)\leq \frac{4}{\alpha}\varphi(t)$. An analogous argument shows that $\Phi_\alpha(t)\ge -\frac{4}{\alpha}\varphi(t)$, $t>0$. Thus $|\Phi_\alpha(t)|\leq \frac{4}{\alpha}\varphi(t)$ for all $t>0$, as required.
\end{proof}

\begin{proof}[Proof of Theorem \ref{Ingham_infty}]
The first statement follows from Theorem~\ref{Ingham}, so it remains only to establish the quantified statements in parts \eqref{infty_a} and \eqref{infty_b}.
Let $\phi$ be an element of the Schwartz class $\mathcal{S}(\RR)$ whose Fourier transform $\psi:=\F \phi$ satisfies $\psi(s)=1$ for $|s|\le1/2$, $0\le\psi(s)\le1$ for $1/2\le|s|\le1$ and $\psi(s)=0$ for $|s|\ge1$. For $r>0$, let $\phi_r(t)=r\phi(rt)$ $(t\in\RR)$ so that $\psi_r:=\F\phi_r$ satisfies $\psi_r(s)=\psi(s/r)$, $s\in\RR$.

Let $R>0$. Since $\int_\RR\phi(t)\,\dd t=\psi(0)=1$, 
$$f(t)-f*\phi_{R}(t)=\left(\int_{-\infty}^0+\int_0^{Rt}+\int_{Rt}^\infty\right)\big(f(t)-f(t-s/R)\big)\phi(s)\,\dd s,\quad t\ge0,$$
where $f$ has been extended by zero to $\RR$. For $t\in\RR_{\pm}$, respectively, let 
\begin{equation*}\label{Phi}
\Phi_+(t)=-\int_t^\infty\phi(s)\,\dd s\quad\mbox{and}\quad\Phi_-(t)=\int_{-\infty}^t\phi(s)\,\dd s,
\end{equation*}
 noting that $\Phi_\pm$ are both primitives of $\phi$ and that $\Phi_\pm\in L^1(\RR_\pm)$ since $\phi\in \mathcal{S}(\RR)$. Now
$$\begin{aligned}
\left\|\int_{-\infty}^0\big(f(t)-f(t-s/R)\big)\phi(s)\,\dd s\right\|&\leq \bigg\|\Big[\big(f(t)-f(t-s/R)\big)\Phi_-(s)\Big]_{s=-\infty}^{s=0}\bigg\|\\&\qquad+\left\|\int_{t}^\infty\Phi_-(R(t-s))\,\dd f(s)\right\|\\&\leq\frac{\|\Phi_-\|_{L^1(\RR_-)}\|f\|_\Lip}{R}
\end{aligned}$$
by properties of the Riemann-Stieltjes integral; see for instance \cite[Section~1.9]{ABHN11}. Similarly, using the fact that $|\phi(t)|\lesssim t^{-2}$ and hence $|\Phi_+(t)|\lesssim t^{-1}$ for all $t>0$,
$$\begin{aligned}
\bigg\|\int_0^{Rt}\big(f(t)-f(t-s/R)\big)\phi(s)\,\dd s\bigg\|&\leq \big\|\big(f(t)-f(0)\big)\Phi_+(Rt)\|\\&\qquad+\left\|\int_{0}^t\Phi_+(R(t-s))\,\dd f(s)\right\|\\&\lesssim\frac{\|f\|_\infty}{Rt}+\frac{\|\Phi_+\|_{L^1(\RR_+)}\|f\|_\Lip}{R}.
\end{aligned}$$
Here and in what follows, the statement $p\lesssim q$ for real numbers $p$ and $q$ indicates that $p\leq C q$ for some constant $C>0$ which is independent of all the parameters that are free to vary in the given situation, in this case of $R$ and $t$. Finally,
$$\bigg\|\int_{Rt}^\infty f(t)\phi(s)\,\dd s\bigg\|\le \|f\|_\infty\int_{Rt}^\infty |\phi(s)|\,\dd s\lesssim\frac{\|f\|_\infty}{Rt}.$$
Thus 
\begin{equation}\label{f-conv}
\|f(t)-f*\phi_{R}(t)\|\lesssim \frac{1}{R},\quad t\ge1.
\end{equation}

Since $\supp \psi_{R}=[-R,R]$ for each $R>0$, a calculation as in \eqref{Parseval} using the fact that $F$ is a boundary function for $\smash{\widehat{f}}$ shows that 
\begin{equation}\label{integral}
f*\phi_{R}(t)=\frac{1}{2\pi}\int_{-R}^R \e^{\ii s t}F(s)\psi_{R}(s)\,\dd s, \quad t\geq0.
\end{equation}
Let $k\ge1$ and suppose that $F\in C^k(\RR;X)$. Integrating by parts $k$ times and estimating crudely by means of \eqref{Ck_dom_fun}, it follows that
$$\begin{aligned}
\|f*\phi_{R}(t)\|&\lesssim \frac{1}{ t^k}\sum_{j=0}^k\frac{1}{R^{k-j}}\int_{-R}^R \|F^{(j)}(s)\|\,\dd s\\&\lesssim \frac{1}{ t^k}\sum_{j=0}^k\frac{RM(R)^{j+1}}{R^{k-j}}\\&\lesssim \frac{RM(R)^{k+1}}{t^k}
\end{aligned}$$
for all $t>0$ and all $R\ge1$. Combining this estimate with \eqref{f-conv} gives
$$\|f(t)\|\lesssim \frac{1}{R}+ \frac{RM(R)^{k+1}}{t^k},\quad R,t\ge1,$$
and \eqref{Ck_est} follows on setting $R=M_k^{-1}(ct)$ for $t\ge1$ sufficiently large.

Now suppose that $F\in C^\infty(\RR;X)$. In order to obtain \eqref{BD} it is necessary to make an explicit choice of the function $\phi$. Thus let $\phi\in L^1(\RR)$ be given by $\phi(0)=3/2$ and
\begin{equation}\label{phi}
\phi(t)=4\frac{\cos(t/2)-\cos(t)}{t^2},\quad t\ne0.
\end{equation}
 Then the Fourier transform $\psi$ of $\phi$ satisfies $\psi(s)=1$ for $|s|\leq 1/2$, $\psi(s)=0$ for $|s|\geq 1$ and $\psi(s)=1-|s|$ for $1/2\leq|s|\leq 1$. For $r>0$, let $\phi_r$ and $\psi_r$ be defined as before. The estimate \eqref{f-conv} still holds and follows by the same argument, except that the fact that $\Phi_\pm\in L^1(\RR_\pm)$ is now a consequence of Lemma~\ref{Leibniz}. Moreover, \eqref{integral} still holds, and integrating by parts $k$ times gives
 $$\begin{aligned}
f*\phi_{R}(t)&=\frac{(-1)^k}{2\pi(\ii t)^k}\int_{-R}^R\e^{\ii st}F^{(k)}(s)\psi_R(s)\,\dd s\\&\qquad+k\frac{(-1)^{k+1}}{2R\pi(\ii t)^k}\int_{\frac{R}{2}\leq|s|\leq R}\e^{\ii st} F^{(k-1)}(s)\sgn(s)\,\dd s\\&\qquad+(k-1)\sum_{j=0}^{k-2}\frac{(-1)^j}{2R\pi (\ii t)^{j+2}}\left[\e^{\ii st}F^{(j)}(s)-\e^{-\ii st}F^{(j)}(-s)\right]_{R/2}^R
\end{aligned}$$
for all $R,t>0$ and  $k\ge1$. For $R\geq1$, it follows that 
\begin{equation}\label{triangle}
\|f*\phi_{R}(t)\|\lesssim k! \frac{RM(R)^{k+1}}{t^k}+k\frac{M(R)}{Rt^2}\sum_{j=0}^{k-2}j!\frac{M(R)^j}{t^j},
\end{equation}
 the contribution from the second integral being dominated by that from the first. Let $k_0=\lfloor{2ct}/{M(R)}\rfloor$ and set $k=k_0$ in \eqref{triangle}. Since $k!\lesssim (k/2c\e)^k$ for all $k\geq0$ by Stirling's formula, the first term on the right-hand side can be estimated as  
 $$k_0! \frac{RM(R)^{k_0+1}}{t^{k_0}}\lesssim RM(R)\left(\frac{k_0M(R)}{2c\e t}\right)^{k_0}\lesssim RM(R)\e^{-2ct/M(R)}.$$ 
 By estimating the ratio of consecutive terms in the sum, it follows from the definition of $k_0$ that the second term on the right-hand side in \eqref{triangle} satisfies
 $$k_0\frac{M(R)}{Rt^2}\sum_{j=0}^{k_0-2}j!\frac{M(R)^j}{t^j}\le k_0\frac{M(R)}{Rt^2} \sum_{j=0}^{k_0-2} (2c)^j \lesssim k_0\frac{M(R)}{Rt^2}\le \frac{1}{Rt}.$$
 Thus combining \eqref{f-conv} and \eqref{triangle} with $k=k_0$ gives
$$\|f(t)\|\lesssim  \frac{1}{R}\left((1+R)^2M(R)^2\e^{-2ct/M(R)}+1\right)$$
for all $R,t\geq1$. Setting $R=\Mlog^{-1}(ct)$ for sufficiently large $t\ge1$ gives \eqref{BD}, thus completing the proof.
\end{proof}

\begin{rem}\label{infty_rem}
\begin{enumerate}[(a)]
\item The estimates in the above proof suggest that, instead of \eqref{Ck_dom_fun} and \eqref{dom_fun}, it might be more natural to require control over certain integral norms of $F$. For instance, replacing \eqref{dom_fun} with  
$$\int_{-R}^R\|F^{(j)}(s)\|\,\dd s\le C j! M(R)^{j+1},\quad R\ge0,\;j\ge0,$$
leads to \eqref{BD} for any $c\in(0,1)$. 
\item The choice of $k_0$ just after the estimate \eqref{triangle} is motivated by the fact that, for any constant $C>0$, the function $t\mapsto (Ct)^t$ ($t>0$) attains is global minimum at $t=(C\e)^{-1}$.
\item A simple estimate using Cauchy's integral formula shows that condition \eqref{dom_fun} is satisfied if the Laplace transform $\smash{\widehat{f}}$ of $f$ extends holomorphically to the region $\Omega_M:=\{\lambda\in\CC:\R\lambda\ge-M(|\I\lambda|)^{-1}\}$ and satisfies $|\smash{\widehat{f}}(\lambda)|\le M(|\I\lambda|)$ for all $\lambda\in\Omega_M$. This establishes the connection with the results in \cite[Section~4]{BD08}.
\item \label{fudge}There is some freedom in the choice of the function $\phi$ in the above proof. Essentially the same argument works, in particular, for the choice 
$$\phi(t)=\frac{4}{t^2}\left(\frac{\sin(t)}{t}-\cos(t)\right),\quad t\in\RR,$$ 
and in this case $\psi_R(s)=1-\frac{s^2}{R^2}$ for $|s|\leq R$ and $\psi_R(s)=0$ otherwise, so $\psi_R$ is precisely the ``fudge factor'' appearing in \cite{BD08}.
\end{enumerate}
\end{rem}

The following example illustrates how the quality of the estimates in Theorem~\ref{Ingham_infty} tends to improve with the smoothness of the boundary function $F$ if the function $M$ grows only moderately fast, but that the rate of decay can become independent of the smoothness of $F$ if $M$ grows very rapidly.

\begin{ex}\label{ex_infty}
Let $f:\RR_+\to X$ and $F:\RR\to X$ be as in the statement of Theorem~\ref{Ingham_infty} and consider the function  $M:\RR_+\to[1,\infty)$  given by $M(R)=(1+R)^\alpha$ for some $\alpha>0$. If $F\in C^k(\RR;X)$ for some $k\ge1$ and \eqref{Ck_dom_fun} holds, then \eqref{Ck_est} becomes 
$$\|f(t)\|=O\left(t^{-\frac{k}{\alpha(k+1)+2}}\right),\quad t\to\infty,$$
and if $F\in C^\infty(\RR;X)$ and \eqref{dom_fun}, \eqref{BD} becomes 
$$\|f(t)\|=O\left(\left(\frac{\log t}{t}\right)^{1/\alpha}\right),\quad t\to\infty.$$
Thus the rate of decay improves with the smoothness of $F$.

On the other hand, if $M(R)=\exp(R^\alpha)$ for some $\alpha>0$, then \eqref{Ck_est} for any $k\ge1$ and \eqref{BD} all become
$$\|f(t)\|=O\left((\log t)^{-1/\alpha}\right),\quad t\to\infty,$$
so the quality of the estimate is independent of the smoothness of $F$.
\end{ex}

Corollary~\ref{cor:BD} below provides a uniform rate of decay for smooth orbits of bounded $C_0$-semigroups whose generator has no spectral points on the imaginary axis. The result was first obtained in \cite{BD08} by means of a contour integral argument involving the ``fudge factor'' appearing in Korevaar's proof of Ingham's theorem; see Remark~\ref*{infty_rem}.\eqref{fudge} above. Here the result is a straightforward consequence of Theorem~\ref{Ingham_infty}.

\begin{cor}\label{cor:BD}
Let $T$ be a bounded $C_0$-semigroup with generator $A$ on a Banach space $X$ and suppose that $\sigma(A)\cap\ii\RR=\emptyset$. If $M:\RR_+\to[1,\infty)$ is a continuous non-decreasing function such that 
$$\|R(\ii s,A)\|\le M(|s|),\quad s\in\RR,$$
 then, for any $c\in(0,1/2)$,  
$$\|T(t)A^{-1}\|=O\left(\frac{1}{\Mlog^{-1}(ct)}\right),\quad t\to\infty.$$
\end{cor}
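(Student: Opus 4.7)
The plan is to reduce the operator-norm bound to a pointwise application of Theorem~\ref{Ingham_infty}\eqref{infty_b}, applied to each orbit $f(t)=T(t)A^{-1}x$, and then recover uniformity in $x$ from linearity and the explicit dependence of the implied constants in that theorem.

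First, since $0\in\rho(A)$, the operator $A^{-1}$ is bounded, so for $\|x\|\le1$ the function $f(t)=T(t)A^{-1}x$ lies in $\Cb(\RR_+;X)$ with $\|f\|_\infty\le\|T\|_\infty\|A^{-1}\|$. Moreover, since $A^{-1}x\in D(A)$, $f$ is differentiable with $f'(t)=T(t)x$, so $f\in\Lip(\RR_+;X)$ with $\|f\|_{\Lip}\le\|T\|_\infty$. A direct computation gives $\widehat{f}(\lambda)=R(\lambda,A)A^{-1}x$ for $\R\lambda>0$. Since $\sigma(A)\cap\ii\RR=\emptyset$, the resolvent extends continuously to the imaginary axis, so setting $F(s)=R(\ii s,A)A^{-1}x$ yields $\widehat{f}(\alpha+\ii s)\to F(s)$ as $\alpha\to0+$ locally uniformly in $s$, and $F$ is therefore a boundary function for $\widehat{f}$.

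Second, I would verify condition \eqref{dom_fun}. Using $\frac{\dd}{\dd\lambda}R(\lambda,A)=-R(\lambda,A)^2$ and induction, one has $F^{(j)}(s)=(-\ii)^jj!\,R(\ii s,A)^{j+1}A^{-1}x$, so the resolvent bound gives
$$\|F^{(j)}(s)\|\le j!\,M(|s|)^{j+1}\|A^{-1}\|,\quad s\in\RR,\;j\ge0,$$
which is \eqref{dom_fun} with $C=\|A^{-1}\|$. Theorem~\ref{Ingham_infty}\eqref{infty_b} then yields $\|T(t)A^{-1}x\|=O(1/\Mlog^{-1}(ct))$ for any $c\in(0,1/2)$.

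Finally, to pass from a pointwise estimate to the claimed operator-norm estimate, I would track the dependence of the implied constants in the proof of Theorem~\ref{Ingham_infty}\eqref{infty_b} on the data. An inspection shows that the estimate \eqref{f-conv} is linear in $\|f\|_\infty$ and $\|f\|_{\Lip}$, and the estimate \eqref{triangle}, together with the choice of $k_0$, depends only on the constant $C$ in \eqref{dom_fun}. In the present setting all three quantities are bounded by a fixed multiple of $\|x\|$, independently of $x$. Hence there is a function $g(t)=O(1/\Mlog^{-1}(ct))$, independent of $x$, such that $\|T(t)A^{-1}x\|\le g(t)\|x\|$ for $\|x\|\le1$, and taking the supremum gives the operator-norm bound. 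The only step that requires genuine attention is this last bookkeeping of uniformity; the identification of the boundary function and the verification of \eqref{dom_fun} are routine, and the main analytic work has already been carried out in Theorem~\ref{Ingham_infty}.
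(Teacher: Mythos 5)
Your proposal is correct and follows essentially the same route as the paper: apply Theorem~\ref{Ingham_infty}\eqref{infty_b} to each orbit $f_x(t)=T(t)A^{-1}x$ with boundary function $F_x(s)=R(\ii s,A)A^{-1}x$, verify \eqref{dom_fun} via $F_x^{(j)}(s)=(-\ii)^jj!R(\ii s,A)^{j+1}A^{-1}x$, and then note uniformity in $x$. The only difference is that you spell out the bookkeeping behind "the implicit constant is independent of $x$", which the paper simply asserts.
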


\begin{proof}
For each $x\in X$, the function $f_x\in\BUC(\RR_+;X)$ given by $f_x(t)=T(t)A^{-1}x$ $(t\geq0)$ satisfies the assumptions of Theorem~\ref{Ingham_infty} for $F_x(s)=R(\ii s,A)A^{-1}x$, $s\in\RR$.  Moreover, 
$$F^{(k)}_x(s)=(-\ii)^kk!R(\ii s,A)^{k+1}A^{-1}x$$  
 for all $k\geq0$ and $s\in \RR$,  so \eqref{dom_fun} holds. Thus, for each $c\in(0,1/2)$ and any $x\in X$ with $\|x\|=1$,  $\|T(t)A^{-1}x\|=O(\Mlog^{-1}(ct)^{-1})$ as $t\to\infty$, and the implicit constant is independent of $x$. Hence the result follows.
\end{proof}

\begin{rem}
It is shown in \cite{BT10} that Corollary~\ref{cor:BD} is optimal in the case where $M(R)=C(1+R)^\alpha$  for some constants $C\ge1$, $\alpha>0$. It follows, in particular, that part \eqref{infty_b} of Theorem~\ref{Ingham_infty} is optimal.
\end{rem}

\subsection{Singularity at zero} \label{zero}

In this section we  obtain a quantified version of Theorem~\ref{Ingham} in the case where $\Sigma=\{0\}$. In this case it is necessary to impose an additional ergodicity assumption, namely that $f$ has bounded primitive; see also Remark~\ref{rem_zero} below. The rate of decay of $f(t)$ as $t\to\infty$  will be determined by the rate of growth of the boundary function $F(s)$ as $|s|\to0$.  As the proof of Theorem~\ref{Ingham_infty} shows, it is in general necessary also to take into consideration the growth of $F(s)$ as $|s|\to\infty$. This will be done in Section~\ref{zero_infty} below, but here we make an additional assumption on the function $f$ which ensures that the  behaviour of $F(s)$ for large values of $|s|$ can be neglected. Thus a function $f\in \Cb(\RR_+;X)$ will be said to be \emph{asymptotically regular} if
\begin{equation}\label{extra}
\|f(t)-f*\phi(t)\|\le\frac{C}{t},\quad t>0,
\end{equation}
for some $\phi\in L^1(\RR)$ such that  $\F\phi\in\Wc(\RR):=W^{1,1}(\RR)\cap\Cc(\RR)$ and $\F\phi\equiv1$ near 0. The following simple lemma shows that if $f$ is asymptotically regular then \eqref{extra} is in fact satisfied for \emph{all} suitable $\phi\in L^1(\RR)$ provided that $F\in \smash{W^{1,1}_\mathrm{loc}}(\RR\backslash\{0\};X)$.

\begin{lem}\label{regular}
Let $X$ be a Banach space and suppose that $f\in\Cb(\RR_+;X)$ is asymptotically regular. If $\smash{\widehat{f}}$ admits a boundary function $F\in \smash{W^{1,1}_\mathrm{loc}}(\RR\backslash\{0\};X)$, then \eqref{extra} holds for all $\phi\in L^1(\RR)$ such that $\F\phi\in \smash{\Wc}(\RR)$ and $\F\phi\equiv1$  in a neighbourhood of 0.
\end{lem}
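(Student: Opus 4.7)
My plan is to exploit the linearity of the convolution in $\phi$: if $\phi_1 \in L^1(\RR)$ is the specific function for which asymptotic regularity \eqref{extra} is assumed, and $\phi_2 \in L^1(\RR)$ is any other function satisfying $\F \phi_2 \in \Wc(\RR)$ with $\F\phi_2 \equiv 1$ in a neighbourhood of $0$, then it suffices to show that $\|f*\phi_1(t)-f*\phi_2(t)\| = O(1/t)$ and apply the triangle inequality. Setting $\phi := \phi_1-\phi_2$, the Fourier transform $\psi := \F\phi = \F\phi_1 - \F\phi_2$ lies in $\Wc(\RR)$ and, crucially, \emph{vanishes} in a neighbourhood of $0$, so $\supp\psi$ is a compact subset of $\RR\setminus\{0\}$.

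The next step is to invoke the Parseval-type identity from the proof of Theorem~\ref{Ingham}, which applies since $\psi \in \Cc(\RR)$, to obtain
$$f*\phi(t) = \frac{1}{2\pi}\int_\RR \e^{\ii st} F(s)\psi(s)\,\dd s,\quad t \ge 0.$$
Because $F \in W^{1,1}_{\mathrm{loc}}(\RR\setminus\{0\};X)$ and $\psi \in W^{1,1}(\RR)$ has compact support bounded away from $0$, the product $F\psi$ lies in $W^{1,1}(\RR;X)$ with compact support, and a single integration by parts (with no boundary contributions) yields
$$f*\phi(t) = -\frac{1}{2\pi\ii t}\int_\RR \e^{\ii st}\bigl(F\psi\bigr)'(s)\,\dd s,\quad t>0.$$
Bounding the integral by $\|(F\psi)'\|_{L^1(\RR;X)} < \infty$, we get $\|f*\phi(t)\| \le C/t$ for $t > 0$ and some constant $C$ depending on $\phi_1, \phi_2, F$.

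Combining this with $\|f(t)-f*\phi_1(t)\| \le C'/t$ via the triangle inequality then gives \eqref{extra} for $\phi_2$, which is the desired conclusion. The only real subtlety is verifying that the integration by parts on $F\psi$ is legitimate across the whole real line; this is handled by the fact that $\psi$ (and hence $F\psi$) has support compactly contained in $\RR\setminus\{0\}$, so one may restrict attention to a compact set on which $F$ is a genuine $W^{1,1}$ function, making the standard fundamental theorem of calculus for absolutely continuous functions directly applicable. Everything else is bookkeeping.
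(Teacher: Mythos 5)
Your proposal is correct and follows essentially the same route as the paper: subtract, observe that $\psi = \F\phi_1 - \F\phi_2$ has compact support in $\RR\setminus\{0\}$ so that $F\psi \in W^{1,1}$ with compact support, integrate by parts once, and finish with the triangle inequality. The only cosmetic wrinkle is that the Parseval identity applies here because $\supp\psi$ is compactly contained in $\RR\setminus\{0\}$ (since $F$ is only a boundary function on $\RR\setminus\{0\}$), not merely because $\psi\in\Cc(\RR)$ as you state, but you already record the key support property so this is just a matter of phrasing.
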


\begin{proof}
If $\phi_1$, $\phi_2\in L^1(\RR)$ are such that $\F\phi_1$, $\F\phi_2\in \Wc (\RR)$ and $\F\phi_1$, $\F\phi_2\equiv 1$ in a neighbourhood of $0$, then $\F \phi_1 -\F\phi_2\in \Wc (\RR)$ has compact support in $\RR\backslash\{ 0\}$ and since $F$ has a locally integrable derivative the product $F\, (\F\phi_1 - \F\phi_2)$ lies in $\Wc (\RR ;X)$. Integrating by parts once and using the fact that   $F$ coincides with the distributional Fourier transform of $f$ on $\RR\backslash\{0\}$, it follows that  
$$\|f*(\phi_1 - \phi_2)(t)\|=\big\|\F^{-1}(F \, (\F\phi_1 - \F\phi_2)) (t)\big\| \lesssim \frac{1}{t},\quad t>0,$$
as required.
\end{proof}

Theorem~\ref{Ingham_zero} below, the main result of this section, is an analogue of Theorem~\ref{Ingham_infty} in the setting where the boundary function $F$ is allowed to have a singularity at zero.  Given a continuous non-increasing function $m:(0,1]\to[1,\infty)$, define the map $m_k:(0,1]\to(0,\infty)$ for each $k\ge1$ by
\begin{equation}\label{mk}
m_k(r)=m(r)\left(\frac{m(r)}{r}\right)^{1/k},\quad 0<r\le1,
\end{equation}
and define $\mlog:(0,1]\to(0,\infty)$ by 
\begin{equation}\label{mlog}
\mlog(r)=m(r)\log\left(1+\frac{m(r)}{r}\right),\quad 0<r\leq1.
\end{equation}

\begin{thm}\label{Ingham_zero}
Let $X$ be a Banach space and let $f\in \Cb(\RR_+;X)$ be an asymptotically regular function such that 
\begin{equation}\label{int_bd}
\sup_{t\geq0}\left\|\int_0^tf(s)\,\dd s\right\|<\infty.
\end{equation}
Suppose that $\smash{\widehat{f}}$ admits a boundary function $F\in\Lloc(\RR\backslash\{0\};X)$. Then $f\in C_0(\RR_+;X)$. Moreover, given a continuous non-increasing function $m:(0,1]\to[1,\infty)$, the following hold.
\begin{enumerate}[(a)]
\item Suppose that $F\in C^k(\RR\backslash\{0\};X)$ and that
\begin{equation}\label{Ck_dom_fun_zero}
\|F^{(j)}(s)\|\le C |s|^{k-j}m(|s|)^{k+1},\quad 0<|s|\le1,\;0\le j\le k,
\end{equation}
for some constant $C>0$. Then, for any $c>0$,
\begin{equation}\label{Ck_est_zero}
\|f(t)\|=O\left(m_k^{-1}(ct)+\frac{1}{t}\right),\quad t\to\infty,
\end{equation}
where $m_k^{-1}$ is the inverse of the function $m_k$ defined in \eqref{mk}.
\item Suppose that $F\in C^\infty (\RR\backslash\{0\};X)$ and that
\begin{equation}\label{dom_fun_zero}
\|F^{(j)}(s)\|\le C j!|s|m(|s|)^{j+1},\quad 0<|s|\le1,\;j\ge0,
\end{equation}
for some constant $C>0$. Then, for any $c\in (0,1)$,
\begin{equation}\label{BCT}
\|f(t)\|=O\left(\mlog^{-1}(ct)+\frac{1}{t}\right),\quad t\to\infty,
\end{equation}
where $\mlog^{-1}$ is the inverse of the function $\mlog$ defined in \eqref{mlog}.
\end{enumerate}
\end{thm}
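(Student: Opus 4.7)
The fact that $f\in C_0(\RR_+;X)$ follows from Theorem~\ref{Ingham}, so only the quantified estimates need attention. The strategy is to write, for each $r\in(0,1]$,
\[
f(t) = f*\phi_r(t) + \bigl(f - f*\phi_r\bigr)(t),
\]
where $\phi\in L^1(\RR)$ is chosen so that $\psi := \F\phi$ is smooth, $\psi\equiv 1$ on $[-1/2,1/2]$, and $\supp\psi\subset[-1,1]$; set $\phi_r(t) = r\phi(rt)$ so that $\F\phi_r(s) = \psi(s/r)$ is supported in $[-r,r]$ and equals $1$ on $[-r/2,r/2]$. For part~(a) any such $\phi$ with $\psi\in C^k(\RR)$ will do, while for part~(b) the specific piecewise linear choice from Theorem~\ref{Ingham_infty}\eqref{infty_b} is needed.

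The first summand is controlled via the bounded primitive $g(t)=\int_0^t f(s)\,\dd s$ guaranteed by \eqref{int_bd}. An integration by parts in $s$, with boundary terms vanishing since $g(0)=0$ and $\phi_r$ is Schwartz, yields $f*\phi_r(t) = \int_0^\infty g(s)\phi_r'(t-s)\,\dd s$, so that
\[
\|f*\phi_r(t)\| \le \|g\|_\infty\|\phi_r'\|_1 = r\,\|g\|_\infty\|\phi'\|_1,\qquad t\ge 0.
\]
This is the source of the $m_k^{-1}(ct)$ or $\mlog^{-1}(ct)$ term in the final bound.

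For the second summand, Lemma~\ref{regular} permits one to take the $\phi$ in the asymptotic regularity assumption to coincide with our fixed $\phi$, so $\|f - f*\phi(t)\|\lesssim 1/t$. It therefore remains to estimate $f*(\phi-\phi_r)(t)$. Since $\F\phi-\F\phi_r$ has compact support in $\RR\backslash\{0\}$, the boundary function identity \eqref{extension} together with a Parseval-type calculation as in \eqref{Parseval} gives
\[
f*(\phi-\phi_r)(t) = \frac{1}{2\pi}\int_\RR \e^{\ii st}F(s)\bigl(\F\phi(s)-\F\phi_r(s)\bigr)\,\dd s.
\]
Because the integrand and sufficiently many of its derivatives vanish at $\pm r/2$ and at $\pm 1$, one may integrate by parts $k$ times without producing any boundary contributions. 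Leibniz's rule combined with \eqref{Ck_dom_fun_zero} then gives $\|f*(\phi-\phi_r)(t)\| \lesssim m(r)^{k+1}/t^k$: on the annulus $r/2\le|s|\le r$ the factor $|s|^{k-j}$ from the bound on $F^{(j)}$ is precisely compensated by the $r^{-(k-j)}$ arising from the $(k-j)$-th derivative of $\psi(\cdot/r)$, and the contribution from $r\le|s|\le 1$ is of the same order. Combining everything,
\[
\|f(t)\| \lesssim r + \frac{1}{t} + \frac{m(r)^{k+1}}{t^k},
\]
and since $m_k(r)^k = m(r)^{k+1}/r$, the choice $r = m_k^{-1}(ct)$ balances the first and third terms and yields \eqref{Ck_est_zero}.

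Part~(b) follows the same scheme but with $\phi$ chosen as in Theorem~\ref{Ingham_infty}\eqref{infty_b}. The non-smoothness of the corresponding $\psi$ at $\pm 1/2$ and $\pm 1$ now produces boundary contributions after integration by parts, which are estimated via \eqref{dom_fun_zero} exactly in the manner of \eqref{triangle}. The main obstacle is the optimisation over the number $k$ of integrations by parts: the choice $k_0 = \lfloor ct/m(r)\rfloor$ combined with Stirling's formula converts the leading factor $k!\,m(r)^{k+1}/t^k$ into one of order $m(r)\,\e^{-ct/m(r)}$, while the lower-order boundary terms are summed as a geometric series. Balancing $r$ against the resulting exponential by setting $r = \mlog^{-1}(ct)$ then yields \eqref{BCT}.
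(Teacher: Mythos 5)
Your proof follows essentially the same route as the paper: the decomposition into $f*\phi_r$, $f*(\phi - \phi_r)$ and $f - f*\phi$; the bounded-primitive bound $\|f*\phi_r(t)\|\lesssim r$; Lemma~\ref{regular} to reduce the asymptotic-regularity term to $O(1/t)$; the Parseval-type identity and $k$-fold integration by parts on a frequency window compactly supported in $\RR\setminus\{0\}$; and, for part (b), the explicit kernel from Theorem~\ref{Ingham_infty}\eqref{infty_b} together with the Stirling optimisation $k_0 = \lfloor ct/m(r)\rfloor$. Apart from a purely cosmetic choice of $\phi_r$ versus $\phi_{2r}$, this is the paper's argument step for step, and the estimates are correct.

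One inaccuracy: the opening claim that $f\in C_0(\RR_+;X)$ ``follows from Theorem~\ref{Ingham}'' does not stand as written. That theorem requires the boundary function to lie in $\Lloc(\RR;X)$ and \eqref{extension} to hold for every $\psi\in\Cc(\RR)$, whereas here $F$ is only defined on $\RR\setminus\{0\}$ and is allowed to blow up at the origin (e.g.\ under \eqref{Ck_dom_fun_zero} or \eqref{dom_fun_zero} with $m$ unbounded), so Theorem~\ref{Ingham} cannot be invoked directly. The paper instead deduces $f\in C_0$ at the qualitative level by noting that the integrand in \eqref{f*diff} is supported in $\{r\le|s|\le1\}$, so the Riemann--Lebesgue lemma gives $f*(\phi-\phi_{2r})\in C_0(\RR_+;X)$, and then combines this with $\|f*\phi_{2r}(t)\|\lesssim r$ (valid for arbitrarily small $r$) and with asymptotic regularity. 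Your quantified estimates do re-derive $C_0$ under the smoothness hypotheses of parts (a) and (b), but the unquantified $C_0$ conclusion of the theorem, which assumes only $F\in\Lloc(\RR\setminus\{0\};X)$, requires this short separate observation rather than an appeal to Theorem~\ref{Ingham}.
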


\begin{proof}
Let $\phi\in L^1(\RR)$ be such that $\F\phi\in \Wc (\RR)$ and $\F\phi\equiv 1$ in a neighbourhood of $0$, and for $r>0$ let $\phi_r(t)=r\phi(rt)$, $t\in\RR$. Moreover, let $g(t)=\smash{\int_0^t} f(s)\,\dd s$ ($t\geq0$) denote the primitive of $f$. By assumption \eqref{int_bd}, integrating by parts gives 
\begin{equation}\label{r}
\begin{aligned}
\|f*\phi_r(t)\|&=r\left\|\int_0^\infty g(s/r)\phi'(rt-s)\,\dd s\right\|\lesssim r,\quad t\ge0.
\end{aligned}
\end{equation}
Choose $\phi\in L^1(\RR)$ as above in such a way that the Fourier transform $\psi:=\F\phi$ is a smooth function satisfying $\psi(s)=0$ for $|s|\ge1$, $\psi(s)=1$ for $|s|\le1/2$, and $0\le\psi(s)\le$ for $1/2\le|s|\le1$. For $r\in(0,1/4]$, let $\psi_r:=\F\phi_r$, where $\phi_r\in L^1(\RR)$ is as defined above. Then $\psi_r(s)=\psi(s/r)$ for $s\in \RR$ and, since $F$ is a boundary function for $\smash{\widehat{f}}$, a calculation as in \eqref{Parseval} shows that
\begin{equation}\label{f*diff}
f*(\phi-\phi_{2r})(t)=\frac{1}{2\pi}\int_{r\leq|s|\leq 1} \e^{\ii st} F(s) \big(\psi(s)-\psi_r(s)\big)\,\dd s,\quad t\geq0,
\end{equation}
and hence $f*(\phi-\phi_{2r})\in C_0(\RR_+;X)$ by the Riemann-Lebesgue lemma. Since \eqref{r} holds for arbitrarily small $r\in(0,1/4]$, it follows from the fact that $f$ is asymptotically regular that in fact $f\in C_0(\RR_+;X)$.

Suppose that $F\in C^k(\RR\backslash\{0\};X)$ for some $k\ge1$. Integrating by parts $k$ times in \eqref{f*diff} and estimating crudely by means of \eqref{Ck_dom_fun_zero} gives
\begin{equation}\label{m(r)}
\|f*(\phi-\phi_r)(t)\|\lesssim \frac{m(r)^{k+1}}{t^k},\quad t>0.
\end{equation}
By asymptotic regularity of $f$, the estimate \eqref{Ck_est_zero} follows from \eqref{r} and \eqref{m(r)} on setting $r=m_k^{-1}(ct)$ for sufficiently large $t>0$.

Now suppose that $F\in C^\infty(\RR\backslash\{0\};X)$. In order to obtain \eqref{BCT}, let $\phi\in L^1(\RR)$ be defined as in \eqref{phi}. With the same notation as above, \eqref{f*diff} again holds. Integrating by parts $k$ times and estimating gives 
$$\begin{aligned}
\|f*(\phi-\phi_{2r})(t)\|&\lesssim \frac{1}{t^k}\int_{2r\leq|s|\leq\frac{1}{2}}\|F^{(k)}(s)\|\,\dd s\\&\qquad+\frac{k}{t^k}\left(\frac{1}{r}\int_{r\leq|s|\leq2r}+\int_{\frac{1}{2}\leq|s|\leq1}\right)\|F^{(k-1)}(s)\|\,\dd s\\&\qquad+k\sum_{j=0}^{k-2}\frac{j!}{t^{j+2}}\left(m(r)^{j+1}+m(1/2)^{j+1}\right)
\end{aligned}$$
for all $t>0$. Setting $k=\lfloor{ct}/{m(r)}\rfloor$, it follows as in the proof of Theorem~\ref{Ingham_infty} that
\begin{equation}\label{conv_est}
\|f*(\phi-\phi_{2r})(t)\|\lesssim m(r)\e^{-ct/m(r)}+\frac{1}{t}
\end{equation}
 for all $t>0$ and $r\in(0,1/4]$. Not set $r=\mlog^{-1}(ct)$ for sufficiently large $t>0$.  Then
 $$m(r)\e^{-ct/m(r)}=\frac{rm(r)}{r+m(r)}\le r,$$
and the result follows from \eqref{r} and asymptotic regularity of $f$.
\end{proof}

\begin{rem}\label{rem_zero}
\begin{enumerate}[(a)]
\item Theorems \ref{Ingham_infty} and \ref{Ingham_zero} can be regarded as dual to one another in a certain sense.  In particular,  the condition $f\in \Lip(\RR_+;X)$ in the former, which is close to requiring a bounded derivative, is replaced in the latter by condition \eqref{int_bd}, requiring $f$ to have a bounded primitive. Note that the latter assumption implies that $f$ is \emph{uniformly mean ergodic} with mean $0$ in the sense that
$$\lim_{\lambda\to 0+} \lambda \int_0^\infty e^{-\lambda s} f(t+s) \; \dd s = 0$$
uniformly for $t\ge0$. This condition is required in Ingham's theorem in the case when $F$ has a singularity at $0$; see \cite[Chapter~4]{ABHN11}.
\item As in \cite{Ma11}, it is possible to extend Theorem~\ref{Ingham_zero} to the case where $F\in \Lloc(\RR\backslash \Sigma;X)$ for an arbitrary finite set $\Sigma\subset\RR$ of singularities. The case  $\Sigma=\emptyset$ is included here but is of no real interest.
\end{enumerate}
\end{rem}

The following example is analogous to Example~\ref{ex_infty}.

\begin{ex}
Let $f:\RR_+\to X$ and $F:\RR\backslash\{0\}\to X$ be as in the statement of Theorem~\ref{Ingham_zero}, and consider the function  $m:(0,1]\to[1,\infty)$  given by $m(r)=r^{-\alpha}$ for some $\alpha\ge1$. If $F\in C^k(\RR\backslash\{0\};X)$ for some $k\ge1$ and \eqref{Ck_dom_fun_zero} holds, then \eqref{Ck_est_zero} becomes 
$$\|f(t)\|=O\left(t^{-\frac{k}{\alpha(k+1)+1}}\right),\quad t\to\infty,$$
and if $F\in C^\infty(\RR\backslash\{0\};X)$ and \eqref{dom_fun_zero} holds, \eqref{BCT} becomes 
$$\|f(t)\|=O\left(\left(\frac{\log t}{t}\right)^{1/\alpha}\right),\quad t\to\infty.$$
Thus the rate of decay improves with the smoothness of $F$.

On the other hand, if $m(r)=\exp(r^{-\alpha})$ for some $\alpha>0$, then \eqref{Ck_est_zero} for any $k\ge1$ and \eqref{BCT} all become
$$\|f(t)\|=O\left((\log t)^{-1/\alpha}\right),\quad t\to\infty,$$
so the quality of the estimate in this case, where $m(r)\to\infty$ very rapidly as $r\to0+$,  is independent of the smoothness of $F$.
\end{ex}

The following result is a direct consequence of Theorem~\ref{Ingham_zero} and is an improvement of \cite[Theorem~6.15]{BCT}. Recall from \cite{BaSr03} the definition of the {\em non-analytic growth bound} $\zeta (T)$ for a $C_0$-semigroup $T$ on a Banach space $X$,
$$\begin{aligned}
\zeta (T)  := \inf \big\{ \omega\in\RR : &\;\| T(t)-S(t)\|\leq M \e^{\omega t}\;\mbox{for some $M\geq1$, all $t>0$ and}\\&\mbox{\;some analytic $S : \Sigma_\theta \to\B(X)$ with $0<\theta\le\pi$} \big\}.
\end{aligned}$$
Here $\Sigma_\theta := \{ z\in\CC : |\arg z|<\theta \}$, for $0<\theta\le\pi$, denotes a sector in the complex plane. It follows from properties of the Laplace transform of analytic functions that if $\zeta (T) <0$, then $\sigma(A)\cap\ii\RR$ is compact and
\begin{equation}\label{res_bd}
\sup_{|s|\ge R}\|R(\ii s,A)\|<\infty
\end{equation} 
for any sufficiently large $R>0$. For bounded $C_0$-semigroups on Hilbert spaces, this condition is even equivalent to $\zeta (T)<0$; see \cite{BaSr03}. Following the terminology of \cite{Se15}, we call a bounded $C_0$-semigroup $T$ satisfying $\zeta (T)<0$ {\em asymptotically analytic}.

\begin{cor}\label{cor:BCT}
Let $T$ be a asymptotically analytic $C_0$-semigroup with generator $A$ on a Banach space $X$ and suppose that $\sigma(A)\cap\ii\RR=\{0\}$. If $m:(0,1]\to[1,\infty)$ is a continuous non-increasing function such that
$$\|R(\ii s,A)\|\leq m(|s|),\quad 0<|s|\leq 1,$$
then, for any $\omega>0$ and $c\in(0,1)$,  
$$\|T(t)AR(\omega,A)\|=O\big(\mlog^{-1}(ct)\big),\quad t\to\infty.$$ 
\end{cor}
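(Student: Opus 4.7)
The plan is to apply Theorem~\ref{Ingham_zero}(b) to the orbits $f_y(t) := T(t)AR(\omega,A)y$ parametrised by $y \in X$ with $\|y\| \le 1$, taking care that all constants remain uniform in $y$. Since asymptotic analyticity includes boundedness of $T$, $f_y \in \Cb(\RR_+;X)$, and the identity $\int_0^t f_y(s)\,\dd s = T(t)R(\omega,A)y - R(\omega,A)y$ delivers the primitive bound \eqref{int_bd} uniformly in $y$. The spectral assumption $\sigma(A) \cap \ii\RR = \{0\}$ ensures that $F_y(s) := R(\ii s,A)AR(\omega,A)y$ is a smooth boundary function for $\smash{\widehat{f_y}}$ on $\RR \backslash \{0\}$, with condition \eqref{extension} verified by dominated convergence.

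For the derivative bound \eqref{dom_fun_zero}, compute $F_y^{(j)}(s) = (-\ii)^j j!\,R(\ii s,A)^{j+1}AR(\omega,A)y$ and use $R(\ii s,A)A = \ii s R(\ii s,A) - I$ on $D(A)$ to rewrite
\[F_y^{(j)}(s) = (-\ii)^j j!\bigl(\ii s R(\ii s,A)^{j+1} - R(\ii s,A)^j\bigr)R(\omega,A)y.\]
The key observation is that $0 \in \sigma(A)$ forces $\|R(\ii s,A)\| \ge \dist(\ii s,\sigma(A))^{-1} \ge |s|^{-1}$, and therefore $m(|s|) \ge |s|^{-1}$ for $0 < |s| \le 1$. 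This inequality yields $m(|s|)^j \le |s|m(|s|)^{j+1}$, and hence $\|F_y^{(j)}(s)\| \le 2 j!\,\|R(\omega,A)\|\,|s|m(|s|)^{j+1}$, uniformly for $\|y\| \le 1$.

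The main obstacle is establishing asymptotic regularity of $f_y$ with a constant independent of $y$; this is the sole place where $\zeta(T) < 0$ enters. By asymptotic analyticity, write $T(t) = S(t) + R(t)$ where $S$ is holomorphic on some sector $\Sigma_\theta \supset \RR_+$ and bounded near the positive real axis, while $\|R(t)\| \lesssim \e^{-\delta t}$ for some $\delta > 0$. Cauchy's integral formula applied to $S$ on $\Sigma_\theta$ then gives $\|S'(t)\| \lesssim 1/t$ for $t \ge 1$, so
\[\|T(t+h)AR(\omega,A)y - T(t)AR(\omega,A)y\| \lesssim \frac{|h|}{t},\quad |h|\le 1,\; t\ge 2,\; \|y\|\le 1.\]
Choosing any $\phi \in L^1(\RR)$ supported in $[-1,1]$ with $\F\phi \in \Wc(\RR)$ and $\F\phi \equiv 1$ near $0$ yields $\|f_y(t) - f_y*\phi(t)\| \lesssim 1/t$ uniformly in $y$, so $f_y$ is asymptotically regular. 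Theorem~\ref{Ingham_zero}(b) then delivers $\|f_y(t)\| = O(\mlog^{-1}(ct) + 1/t)$ uniformly for $\|y\|\le 1$, and since the same lower bound $m(r) \ge 1/r$ forces $\mlog(r) \ge 1/r$ near $0$, we have $\mlog^{-1}(ct) \gtrsim 1/t$ for large $t$; the $1/t$ term is therefore absorbed, yielding the claimed operator-norm estimate.
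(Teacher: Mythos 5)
Your setup, primitive bound, and derivative estimates all match the paper's argument: the identity $R(\ii s,A)A=\ii sR(\ii s,A)-I$, the formula for $F_y^{(j)}$, the lower bound $m(r)\ge 1/r$ (from $0\in\sigma(A)$), and the absorption of the $1/t$ term into $\mlog^{-1}(ct)$ are all correct and essentially what the paper does. The gap is in the asymptotic regularity step, which is the heart of the corollary and the only place where $\zeta(T)<0$ is used.

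Your argument there has two concrete defects. First, from the decomposition $T=S+R$ with $S$ analytic and $\|R(t)\|\lesssim\e^{-\delta t}$, the asserted estimate
$\|T(t+h)AR(\omega,A)y-T(t)AR(\omega,A)y\|\lesssim |h|/t$ does not follow: nothing is known about the modulus of continuity of $R$, so $\|R(t+h)-R(t)\|$ is only $O(\e^{-\delta t})$ \emph{independently} of $h$, and for $|h|\to 0$ this dominates $|h|/t$. Indeed, an asymptotically analytic semigroup need not be norm-continuous, so no estimate of the form $\|T(t+h)-T(t)\|\lesssim|h|/t$ can hold in general. (The contributions of $S$ and $R$ to $\|f_y(t)-f_y*\phi(t)\|$ have to be estimated separately after convolution; conflating them into a single Lipschitz bound is where the argument breaks.) Second, the auxiliary function $\phi$ cannot simultaneously be supported in $[-1,1]$ and have $\F\phi\in\Wc(\RR)$: a compactly supported $\phi\in L^1$ has entire Fourier transform, which cannot vanish on an open set. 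Once $\phi$ has unbounded support, the tail $\int_{|s|\ge1}|\phi(s)|\,\dd s$ contributes an $O(1)$ rather than $O(1/t)$ term unless one exploits the decay of $\phi$ (e.g.\ $|\phi(s)|\lesssim s^{-2}$) and splits the range of integration more carefully, which your proof does not do. There is also a subtler issue lurking: Cauchy's estimate $\|S'(t)\|\lesssim 1/t$ requires $S$ to be \emph{bounded on a complex neighbourhood} of $\RR_+$, but the definition of $\zeta(T)$ only yields analyticity on a sector with no growth control, and boundedness on $\RR_+$ alone does not propagate off the real axis.

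The paper sidesteps all of this by a different mechanism: it invokes (via \cite[Theorem~3.4]{Se15} and \cite[Theorem~3.6]{BaSr03}) that $h_x:=f_x-f_x*\phi$ lies in $L^1(\RR;X)$ with $\int_\RR\|h_x(t)\|\,\dd t\lesssim\|x\|$, and then uses the semigroup identity $\int_0^tT(t-s)h_x(s)\,\dd s=th_x(t)+\int_0^tf_x(t-r)r\phi(r)\,\dd r$ together with boundedness of $T$ to deduce $\|h_x(t)\|\lesssim 1/t$. If you wish to argue directly from the decomposition $T=S+R$, you need to (i) establish boundedness of $S$ on a complex neighbourhood of $[1,\infty)$, (ii) estimate $\|g_y(t)-g_y*\phi(t)\|$ using $\|S'\|\lesssim1/t$ on the range $|s|\le t/2$ and crude bounds together with the tail decay of $\phi$ elsewhere, and (iii) estimate $\|h_y(t)-h_y*\phi(t)\|$ using $\|R(t)\|\lesssim\e^{-\delta t}$ and the decay of $\phi$, rather than a pointwise Lipschitz bound on $T$.
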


\begin{proof}
Let $x\in X$, $\omega>0$, and let $f_x(t)=T(t)AR(\omega,A)x$, $t\geq0$. Then the primitive $g_x$ of $f_x$ is given, for  $t\geq0$, by
$$g_x(t)=\int_0^t f_x(s)\,\dd s=A\int_0^tT(s)R(\omega,A)x\,\dd s=(T(t)-I)R(\omega,A)x,\quad t\ge0,$$
 so $g_x\in\BUC(\RR_+;X)$. Furthermore, 
$$\widehat{f_x}(\lambda)=\lambda\widehat{g_x}(\lambda)=\big(\lambda R(\lambda,A)-I\big)R(\omega,A)x,\quad \R\lambda>0,$$
 and hence \eqref{Parseval} is satisfied for the function $F=F_x$ given by 
$$F_x(s)=\big(\ii sR(\ii s,A)-I\big)R(\omega,A)x,\quad s\in\RR\backslash\{0\}.$$
Now
$$F_x^{(k)}(s)=(-\ii)^kk!\big(\ii s R(\ii s,A)-I\big)R(\ii s,A)^kR(\omega,A)x$$
for all $s\ne0$ and all integers $k\geq0$. Since $\|R(\lambda,A)\|\geq \dist(\lambda,\sigma(A))^{-1}$ for all $\lambda\in\rho(A)$, \eqref{dom_fun_zero} is satisfied.

 Let $\phi\in L^1 (\RR )$ be such that $\F\phi\in C^\infty_c (\RR )$ and $\F\phi\equiv 1$ in a neighbourhood of $0$. By an argument as in the proof of \cite[Theorem~3.4]{Se15} using \cite[Theorem 3.6]{BaSr03}, $h_x:=f_x-f_x*\phi $ is  measurable and
\begin{equation} \label{eq.1}
 \int_\RR \| h_x (t)\| \,\dd t \lesssim \|x\|,\quad x\in X,
\end{equation}
were $f_x$ has been extended by zero to $\RR$. Using  the semigroup property, 
$$\begin{aligned}
\int_0^t T(t-s) h_x(s)\,\dd s&= \int_0^t \left(f_x(t)\,\dd s - \int_{-\infty}^s  f_x(t-r)\phi (r)\,\dd r\right)\dd s \\
& = t h_x(t) + \int_0^t  f_x(t-r)r\phi (r)\,\dd r . 
\end{aligned}$$
for all $x\in X$ and $t\geq 0$ and, by \eqref{eq.1} and boundedness of $T$,
$$\left\| \int_0^t T(t-s) h_x(s)\,\dd s \right\| \lesssim \| x\|,\quad x\in X,\; t\ge0. $$
Moreover, since $\F\phi\in C^\infty_c (\RR )$, the function $r\mapsto r\phi (r)$ is integrable, and therefore
$$\left\| \int_0^t f_x(t-r) r\phi (r) \,\dd r \right\| \lesssim \| x\|,\quad x\in X,\;t\ge0. $$
Thus \eqref{extra} is satisfied. Since the implicit constants can be taken to be independent of $x\in X$ with $\|x\|=1$, \eqref{BCT} holds for the function $f(t)=T(t)AR(\omega,A)$, $t\ge0$.

To finish the proof, note that $\mlog(r)\gtrsim m(r)\geq 1/r$ for all sufficiently small $r\in(0,1]$, and hence, given $c>0$, $t^{-1}\lesssim\mlog^{-1}(ct)$ for all sufficiently large $t>0$.
\end{proof}

\subsection{Singularities at zero and infinity}\label{zero_infty}

In this brief final section we formulate a result analogous to Theorems~\ref{Ingham_infty} and \ref{Ingham_zero} in which $\Sigma=\{0\}$ but $f$ is not assumed to be asymptotically regular. Consequently, the boundary function $F(s)$ is assumed to have controlled growth both as $|s|\to0$ and as $|s|\to\infty$. The proofs are  similar to those given in Sections~\ref{infty} and \ref{zero}, and hence are omitted.

\begin{thm}\label{Ingham_zero_infty}
Let $X$ be a Banach space and let $f\in \Cb(\RR_+;X)\cap \Lip(\RR_+;X)$ be such that 
$$\sup_{t\geq0}\bigg\|\int_0^tf(s)\,\dd s\bigg\|<\infty.$$
Suppose that $\smash{\widehat{f}}$ admits a boundary function $F\in\smash{\Lloc}(\RR\backslash\{0\};X)$. Then $f\in C_0(\RR_+;X)$. Moreover, given a continuous non-increasing function $m:(0,1]\to[1,\infty)$ and a continuous non-decreasing function $M:[1,\infty)\to[1,\infty)$, the following hold.
\begin{enumerate}[(a)]
\item Suppose that $F\in C^k(\RR\backslash\{0\};X)$ for some $k\ge1$ and that
\[
 \| F^{(j)}(s)\|\le \begin{cases} 
                     C |s|^{k-j}m(|s|)^{k+1},  &\text{ $0<|s|\le1,\;0\le j\le k,$} \\[1mm]
                     C M(|s|)^{j+1}, &\text{$|s|\ge1,\;0\le j\le k,$}
                    \end{cases}
\]
for some constant $C>0$. Then, for any $c>0$,
$$\|f(t)\|=O\left(m_k^{-1}(ct)+\frac{1}{M_k^{-1}(ct)}\right),\quad t\to\infty,$$
where $m_k^{-1}$ and $M_k^{-1}$ are the inverse of the functions $m_k$ and $M_k$ defined as in \eqref{mk} and \eqref{M_k}, respectively.
\item Suppose that $F\in C^\infty (\RR\backslash\{0\};X)$ and that
\[
 \| F^{(j)}(s)\|\le \begin{cases} 
                     C j!|s|m(|s|)^{j+1},&\text{ $0<|s|\le1,\;j\ge0,$} \\[1mm]
                     Cj! M(|s|)^{j+1},&\text{ $|s|\ge1,\; j\ge0,$}
                    \end{cases}
\]
for some constant $C>0$. Then, for any $c\in (0,1/2)$,
$$\|f(t)\|=O\left(\mlog^{-1}(ct)+\frac{1}{\Mlog^{-1}(ct)}+\frac{1}{t}\right),\quad t\to\infty,$$
where $\mlog^{-1}$ and $\Mlog^{-1}$ are the inverse of the functions $\mlog$ and $\Mlog$ defined as in \eqref{mlog} and \eqref{M_log}, respectively.
\end{enumerate}
\end{thm}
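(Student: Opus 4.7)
The plan is to synthesise the arguments from the proofs of Theorems~\ref{Ingham_infty} and \ref{Ingham_zero} by working with two independent cutoff scales: a large $R\ge 1$ taming the behaviour of $F$ at infinity and a small $r\in(0,1/4]$ taming its behaviour at zero. Choosing $\phi\in L^1(\RR)$ whose Fourier transform $\psi:=\F\phi$ is smooth with $\psi\equiv 1$ on $[-1/2,1/2]$ and $\supp\psi\subset[-1,1]$ (and specifically the explicit choice \eqref{phi} in part (b)), and setting $\phi_\rho(t):=\rho\phi(\rho t)$ for $\rho>0$, I would decompose
$$f(t) = \bigl(f(t)-f*\phi_R(t)\bigr) + f*(\phi_R-\phi_{2r})(t) + f*\phi_{2r}(t),\quad t\ge 0,$$
so that growth at infinity and at zero are handled by the first and last pieces respectively, while the middle piece is purely annular.

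The outer two pieces are controlled by the one-sided arguments already developed in the paper. The term $f(t)-f*\phi_R(t)$ is estimated exactly as in the proof of Theorem~\ref{Ingham_infty}, using the Lipschitz hypothesis on $f$ together with the primitives $\Phi_\pm$ of $\phi$, to give $\|f(t)-f*\phi_R(t)\|\lesssim 1/R$ for $t\ge 1$. The term $f*\phi_{2r}(t)$ is controlled as in the proof of Theorem~\ref{Ingham_zero}, via integration by parts against the bounded primitive of $f$, yielding $\|f*\phi_{2r}(t)\|\lesssim r$ uniformly in $t\ge 0$. For the middle term, the boundary-function identity underlying \eqref{Parseval} and \eqref{f*diff}, combined with $\supp(\psi_R-\psi_{2r})\subset\{r\le|s|\le R\}$, gives
$$f*(\phi_R-\phi_{2r})(t) = \frac{1}{2\pi}\int_{r\le|s|\le R} \e^{\ii st}F(s)\bigl(\psi_R(s)-\psi_{2r}(s)\bigr)\,\dd s.$$

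For part (a) I would integrate by parts $k$ times in this identity, split the domain at $|s|=1$ and apply the zero hypothesis on $F^{(j)}$ to the inner annulus $\{r\le|s|\le 1\}$ and the infinity hypothesis to the outer annulus $\{1\le|s|\le R\}$. A crude estimate mirroring those carried out in the two previous theorems yields
$$\|f*(\phi_R-\phi_{2r})(t)\|\lesssim\frac{m(r)^{k+1}}{t^k}+\frac{R\,M(R)^{k+1}}{t^k},$$
and the choices $r=m_k^{-1}(ct)$, $R=M_k^{-1}(ct)$ then render each piece comparable to $r$ or $1/R$ respectively, giving the claim. For part (b), with $\phi$ as in \eqref{phi}, the same integration-by-parts procedure combined with Stirling's formula produces bounds of the shape $m(r)\e^{-ct/m(r)}+1/t$ and $R\,M(R)\e^{-2ct/M(R)}+1/(Rt)$ for the inner and outer parts respectively; a single exponent $k\approx ct/\max(m(r),M(R))$ suffices provided the scales $r=\mlog^{-1}(ct)$ and $R=\Mlog^{-1}(ct)$ are used, since these choices force $m(r)$ and $M(R)$ to be of the same order. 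The constraint $c\in(0,1/2)$ is inherited from the infinity-side argument, which is the more restrictive of the two.

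The main obstacle is the bookkeeping of the boundary contributions produced by integration by parts at the various transition points of $\psi_R-\psi_{2r}$ (namely $\pm r,\pm 2r,\pm R/2,\pm R$ for the explicit choice in part (b)), together with verifying that each such contribution is dominated by one of the three principal bounds $1/R$, $r$, or $1/t$. A further subtlety in part (b) is that a \emph{single} order $k$ of integration by parts must simultaneously optimise both the inner and the outer annular integral; matching the two scales so that $m(r)$ and $M(R)$ are comparable, both of order $t/\log t$, is precisely what makes a common choice of $k$ possible.
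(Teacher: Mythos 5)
The overall plan is sound, and the three-piece decomposition together with a single choice of $k$ does give part (a), since with $k$ fixed the inner and outer contributions $m(r)^{k+1}/t^k$ and $RM(R)^{k+1}/t^k$ are optimised independently by the choices $r=m_k^{-1}(ct)$ and $R=M_k^{-1}(ct)$.

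For part (b), however, there is a genuine gap in the final paragraph. You assert that the choices $r=\mlog^{-1}(ct)$ and $R=\Mlog^{-1}(ct)$ force $m(r)$ and $M(R)$ to be of the same order, and that this is what allows a \emph{single} exponent $k\approx ct/\max(m(r),M(R))$ to control both annular integrals. This assertion is false in general. For example, take $m(r)=1/r$ and $M(R)\equiv 1$. Then $\mlog(r)=ct$ gives $r\asymp \log t/t$ and $m(r)\asymp ct/\log t$, while $\Mlog(R)=\log(1+R)=ct$ gives $R\asymp \e^{ct}$ and $M(R)=1$; the two scales are nowhere near comparable. With $k\approx 2ct/m(r)\asymp \log t$, the outer contribution $k!\,RM(R)^{k+1}/t^k$ contains the factor $R\asymp\e^{ct}$, which dwarfs the Stirling gain $(k/\e t)^k\asymp \e^{-C(\log t)^2}$, so this term \emph{diverges}. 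Conversely, taking $k\approx 2ct/M(R)=2ct$ would ruin the inner contribution, since $k!\,m(r)^{k+1}/t^k\asymp m(r)(km(r)/\e t)^k$ blows up once $m(r)\gg M(R)$. No single $k$ works for this pair $(m,M)$, so the claimed matching argument cannot be carried out.

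The fix is to decouple the two scales by inserting an intermediate cut-off, i.e.\ to use the four-term decomposition
\[
f=(f-f*\phi_R)+(f*\phi_R-f*\phi_1)+(f*\phi_1-f*\phi_{2r})+f*\phi_{2r},
\]
where $\psi_1:=\F\phi_1$ is the unscaled cut-off (so that $\psi_R-\psi_1$ is supported in $\{1/2\le|s|\le R\}$ and $\psi_1-\psi_{2r}$ in $\{r\le|s|\le1\}$). The outer annular piece is then handled verbatim by the proof of Theorem~\ref{Ingham_infty}(b) with $k_{\mathrm{out}}=\lfloor 2ct/M(R)\rfloor$, giving $\lesssim RM(R)\e^{-2ct/M(R)}+1/(Rt)\lesssim 1/R$ once $R=\Mlog^{-1}(ct)$; the inner annular piece is handled verbatim by the proof of Theorem~\ref{Ingham_zero}(b) with $k_{\mathrm{in}}=\lfloor ct/m(r)\rfloor$, giving $\lesssim m(r)\e^{-ct/m(r)}+1/t\lesssim r+1/t$ once $r=\mlog^{-1}(ct)$. (The extra kinks of $\psi_1$ at $\pm1/2,\pm1$ produce boundary terms involving $F^{(j)}$ at these fixed points, which are harmless and absorbed into the $1/t$ term.) With this modification your argument goes through and, as you correctly observe, the constraint $c\in(0,1/2)$ comes from the outer side.
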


The following result is an application of Theorem~\ref{Ingham_zero_infty} to $C_0$-semigroups. The same result is obtained in \cite[Proposition~3.1]{Ma11} by means of a contour integral argument; see also \cite[Section~8]{BCT}.

\begin{cor}
Let $T$ be a bounded asymptotically analytic $C_0$-semigroup with generator $A$ on a Banach space $X$ and suppose that $\sigma(A)\cap\ii\RR=\{0\}$. Suppose further that $m:(0,1]\to[1,\infty)$ is a continuous non-increasing function such that
$$\|R(\ii s,A)\|\leq m(|s|),\quad 0<|s|\leq 1,$$
and that $M:[1,\infty)\to[1,\infty)$ is a continuous non-decreasing function such that
$$\|R(\ii s,A)\| \leq M(|s|),\quad |s|\ge1.$$
Then, for any $\omega>0$ and $c\in(0,1/2)$,  
$$\|T(t)AR(\omega,A)^2\|=O\left(\mlog^{-1}(ct)+\frac{1}{\Mlog^{-1}(ct)}\right),\quad t\to\infty.$$ 
\end{cor}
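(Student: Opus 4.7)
The plan is to apply Theorem~\ref{Ingham_zero_infty}(b) to the orbit $f_x(t):=T(t)AR(\omega,A)^2 x$ for each $x\in X$, tracking that all implicit constants can be chosen linear in $\|x\|$. First I would check the structural hypotheses: $f_x$ is bounded because $AR(\omega,A)^2=\omega R(\omega,A)^2-R(\omega,A)$ is a bounded operator and $T$ is bounded; $f_x$ is Lipschitz because $R(\omega,A)^2 x\in D(A^2)$ with $A^2R(\omega,A)^2=(\omega R(\omega,A)-I)^2$ bounded, so that $\|f_x(t+h)-f_x(t)\|\le h\|T\|_\infty\|(\omega R(\omega,A)-I)^2\|\,\|x\|$ for all $h,t\ge 0$; and the primitive $g_x(t)=(T(t)-I)R(\omega,A)^2 x$ is uniformly bounded by $(1+\|T\|_\infty)\|R(\omega,A)\|^2\|x\|$.

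Next I would identify the boundary function of $\widehat{f_x}$ on $\RR\setminus\{0\}$ as
$$F_x(s)=R(\ii s,A)AR(\omega,A)^2 x=(\ii sR(\ii s,A)-I)R(\omega,A)^2 x,$$
which is smooth on $\RR\setminus\{0\}$ with derivatives
$$F_x^{(j)}(s)=(-\ii)^j j!\,R(\ii s,A)^{j+1}AR(\omega,A)^2 x,\quad j\ge 0.$$

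The heart of the argument is to derive the two regime-dependent bounds demanded by Theorem~\ref{Ingham_zero_infty}(b), uniformly in $x$ with $\|x\|=1$. For $|s|\ge 1$ I would set $y:=AR(\omega,A)^2 x$ and estimate directly
$$\|F_x^{(j)}(s)\|\le j!\,\|R(\ii s,A)\|^{j+1}\|y\|\lesssim j!\,M(|s|)^{j+1}\|x\|.$$
For $0<|s|\le 1$ the direct estimate is too crude, so I would use the identity $R(\ii s,A)A=\ii sR(\ii s,A)-I$ on $D(A)$ to write $R(\ii s,A)^{j+1}A=\ii sR(\ii s,A)^{j+1}-R(\ii s,A)^j$, obtaining
$$\|F_x^{(j)}(s)\|\le j!\bigl(|s|m(|s|)^{j+1}+m(|s|)^j\bigr)\|R(\omega,A)\|^2\|x\|.$$
The crucial observation is that $0\in\sigma(A)$ forces $\|R(\ii s,A)\|\ge 1/|s|$, whence $m(|s|)\ge 1/|s|$; this absorbs the second summand into the first and yields $\|F_x^{(j)}(s)\|\lesssim j!\,|s|m(|s|)^{j+1}\|x\|$, which is exactly the hypothesis required near zero.

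Theorem~\ref{Ingham_zero_infty}(b) then gives, for each $c\in(0,1/2)$,
$$\|T(t)AR(\omega,A)^2 x\|=O\!\left(\mlog^{-1}(ct)+\frac{1}{\Mlog^{-1}(ct)}+\frac{1}{t}\right)\|x\|$$
uniformly in $x$. The final cosmetic step, exactly as at the end of the proof of Corollary~\ref{cor:BCT}, is to absorb the $1/t$ term: since $m(r)\ge 1/r$ near $0$, one has $\mlog(r)\gtrsim 1/r$ there and hence $\mlog^{-1}(ct)\gtrsim 1/t$ for large $t$. The main point of care is the two-regime derivative estimate for $F_x$, but algebraically this reduces to the identity $R(\ii s,A)A=\ii sR(\ii s,A)-I$ on $D(A)$ combined with the trivial lower bound on $\|R(\ii s,A)\|$ coming from $0\in\sigma(A)$.
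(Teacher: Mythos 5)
Your proof is correct and is precisely the argument the paper intends (the paper itself omits the proof, referring to Theorem~\ref{Ingham_zero_infty} and pointing out that the same result appears in \cite{Ma11}). You correctly identify why the operator is $AR(\omega,A)^2$ rather than $AR(\omega,A)$ as in Corollary~\ref{cor:BCT}: the extra factor of $R(\omega,A)$ gives the Lipschitz continuity required by Theorem~\ref{Ingham_zero_infty}, which replaces the asymptotic-regularity argument of Corollary~\ref{cor:BCT}; the factor of $A$ gives the bounded primitive. The two-regime derivative estimate, the use of $\|R(\ii s,A)\|\ge 1/|s|$ to absorb the $m(|s|)^j$ term near zero, and the absorption of $1/t$ into $\mlog^{-1}(ct)$ at the end all match the structure of the proof of Corollary~\ref{cor:BCT}. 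One small observation worth recording: your argument never actually uses the hypothesis that $T$ is asymptotically analytic, only that $T$ is bounded — Theorem~\ref{Ingham_zero_infty} has no asymptotic-regularity requirement, so the hypothesis $\zeta(T)<0$ is not needed here (it merely guarantees that $M$ may be chosen bounded, which sharpens but is not required by the stated conclusion).
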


\bibliographystyle{plain}

\end{document}